\documentclass{amsart}

\usepackage[english]{babel}
\usepackage[letterpaper,top=2cm,bottom=2cm,left=3cm,right=3cm,marginparwidth=1.75cm]{geometry}

\usepackage[colorlinks=true, allcolors=blue]{hyperref}

\usepackage{quiver}
\usepackage{graphicx} 
\usepackage{amsthm,amsmath,amssymb,stmaryrd,enumerate,euscript,mathrsfs,a4,array}
\usepackage{multirow}
\usepackage{tikz-cd}
\usepackage{tikz}
\usepackage{cleveref}
\usepackage{cancel}

\newtheorem{theorem}{Theorem}[section]
\newtheorem*{theorem*}{Theorem}
\newtheorem{prop}[theorem]{Proposition}

\newtheorem{lemma}[theorem]{Lemma}

\theoremstyle{definition} 
\newtheorem{defn}[theorem]{Definition}

\theoremstyle{remark}
\newtheorem{rmk}[theorem]{Remark}
\newtheorem{eg}[theorem]{Example}

\newtheorem*{remark*}{Remark}
\newtheorem*{remarks*}{Remarks}
\newtheorem*{notation*}{Notation}
\newtheorem*{convention*}{Convention}



\def\red{\color{red}}

\def\black{\color{black}}



\newcommand{\gcat}{\mathcal}
\newcommand{\gk}{\gcat{K}}

\newcommand{\pschG}{\mathcal{G}}
\newcommand{\pschH}{\mathcal{H}}
\newcommand{\cc}{\mathcal{C}}
\newcommand{\labL}{\mathcal{L}}
\newcommand{\intchg}{\gamma}

\newcommand{\catname}[1]{{\normalfont\textbf{#1}}}

\newcommand{\Gray}{\catname{Gray}}

\title{Some Remarks on the Interchange in Gray-categories}
\author{Nicola Di Vittorio}
\author{Gabriele Lobbia}

\address{
Nicola \textsc{Di Vittorio}: \newline
School of Mathematical and Physical Sciences\newline
Macquarie University\newline
Sydney, Australia\newline
}

\address{
Gabriele Lobbia: \newline
Department of Mathematics and Statistics\newline
Masaryk University, Faculty of Sciences\newline
Kotl\'{a}\v{r}sk\'{a} 2, 611 37 Brno, Czech Republic
}

\begin{document}
\tikzset{Rightarrow/.style={double equal sign distance,>={Implies},->},
triple/.style={-,preaction={draw,Rightarrow}},
quadruple/.style={preaction={draw,Rightarrow,shorten >=0pt},shorten >=1pt,-,double,double
distance=0.2pt}}
\maketitle

\begin{abstract}
We prove a generalised interchange equality for 3-cells in a Gray-category, i.e.\ we show that it still holds modulo the unique isomorphism given by the Gray-categorical pasting theorem of Di Vittorio. This significantly simplifies many calculations in Gray-categories. 
\end{abstract}

\section{Introduction}

\subsection*{Pasting Diagrams}
Pasting diagrams are a graphical device used to express composition in higher dimensional categories, first introduced in \cite{benabou:introduction}. A 2-dimensional pasting diagram looks like the following picture
\[\begin{tikzcd}
	& \bullet & \bullet & \bullet \\
	\bullet && \bullet && \bullet \\
	& \bullet && \bullet
	\arrow[from=2-1, to=1-2]
	\arrow[from=1-2, to=1-3]
	\arrow[from=1-3, to=1-4]
	\arrow[from=1-4, to=2-5]
	\arrow[from=2-1, to=3-2]
	\arrow[from=3-2, to=3-4]
	\arrow[from=3-4, to=2-5]
	\arrow[""{name=0, anchor=center, inner sep=0}, from=2-1, to=2-3]
	\arrow[from=1-2, to=2-3]
	\arrow[""{name=1, anchor=center, inner sep=0}, from=2-3, to=2-5]
	\arrow[from=3-2, to=2-3]
	\arrow[""{name=2, anchor=center, inner sep=0}, from=1-3, to=2-5]
	\arrow[shorten <=4pt, shorten >=4pt, Rightarrow, from=1-3, to=2-3]
	\arrow[shorten <=6pt, shorten >=6pt, Rightarrow, from=1-2, to=0]
	\arrow[shorten <=1pt, shorten >=2pt, Rightarrow, from=1-4, to=2]
	\arrow[shorten <=6pt, shorten >=5pt, Rightarrow, from=0, to=3-2]
	\arrow[shift right=5, shorten <=6pt, shorten >=6pt, Rightarrow, from=1, to=3-4]
\end{tikzcd}\]
where the dots are thought as placeholders for objects in a higher dimensional category and the cells between them for higher cells. When considering 2-dimensional pasting diagrams, 
the composition we are actually interested in is the one between 2-cells, 
so that we end up with a 2-cell between the composition of the uppermost and the lowermost $n$-tuples of composable 1-cells. This idea can be generalised to higher dimensions. For instance, one can talk about 3-dimensional pasting diagrams with 3-cells having 2-dimensional pasting diagrams as domains and codomains and so on. The picture below is an example of one of these.
\begin{center}
\begin{tikzcd}[ampersand replacement=\&]
	\bullet \& \bullet \\
	\bullet \& \bullet \& \bullet \\
	\& \bullet \& \bullet
	\arrow[from=1-2, to=2-3]
	\arrow[""{name=0, anchor=center, inner sep=0}, from=1-1, to=2-2]
	\arrow[""{name=1, anchor=center, inner sep=0}, from=2-2, to=2-3]
	\arrow[from=2-3, to=3-3]
	\arrow[""{name=2, anchor=center, inner sep=0}, from=1-1, to=1-2]
	\arrow[from=2-2, to=3-2]
	\arrow[from=1-1, to=2-1]
	\arrow[""{name=3, anchor=center, inner sep=0}, from=2-1, to=3-2]
	\arrow[""{name=4, anchor=center, inner sep=0}, from=3-2, to=3-3]
	\arrow[shorten <=14pt, shorten >=14pt, Rightarrow, from=2, to=1]
	\arrow[shorten <=9pt, shorten >=9pt, Rightarrow, from=1, to=4]
	\arrow[shorten <=9pt, shorten >=9pt, Rightarrow, from=0, to=3]
\end{tikzcd}
    $\quad\Rrightarrow\quad$ 
\begin{tikzcd}[ampersand replacement=\&]
	\bullet \& \bullet \\
	\bullet \& \bullet \& \bullet \\
	\& \bullet \& \bullet
	\arrow[""{name=0, anchor=center, inner sep=0}, from=1-1, to=1-2]
	\arrow[from=1-2, to=2-2]
	\arrow[""{name=1, anchor=center, inner sep=0}, from=1-2, to=2-3]
	\arrow[""{name=2, anchor=center, inner sep=0}, from=2-2, to=3-3]
	\arrow[from=2-3, to=3-3]
	\arrow[from=1-1, to=2-1]
	\arrow[from=2-1, to=3-2]
	\arrow[""{name=3, anchor=center, inner sep=0}, from=3-2, to=3-3]
	\arrow[""{name=4, anchor=center, inner sep=0}, from=2-1, to=2-2]
	\arrow[shorten <=9pt, shorten >=9pt, Rightarrow, from=0, to=4]
	\arrow[shorten <=14pt, shorten >=14pt, Rightarrow, from=4, to=3]
	\arrow[shorten <=9pt, shorten >=9pt, Rightarrow, from=1, to=2]
\end{tikzcd}
\end{center}
There are a number of ways to make the notion of pasting diagram precise. In the 2-dimensional setting it is often useful to work with \emph{pasting schemes} (see \cite{AJPow90} and \cite[Definition~4.6]{div:past-thm-gray}), i.e.\ directed graphs with the following properties:
\begin{enumerate}
    \item[(i)] they are \emph{plane}, meaning it comes with a specified embedding in $\mathbb{R}^2$ that preserves orientation,
    \item[(ii)] they have distinct vertices $s$ and $t$ such that for every vertex $v$ there exist directed paths from $s$ to $v$ and from $v$ to $t$,
    \item[(iii)] they are \emph{acyclic}, meaning there are no directed paths from a vertex to itself.
\end{enumerate}

An equivalent characterization of pasting schemes, provided in \cite{AJPow90}, ensures that for every interior face $F$ there exist distinct vertices $s_F$ and $t_F$ and directed paths $\sigma_F$
and $\tau_F$ from $s_F$ to $t_F$ such that the boundary of $F$ is the directed path $\sigma_F\tau_F^*$, where by $\tau_F^*$ we mean that $\tau_F$ is traversed in the opposite way. We refer to $\sigma_F$ and $\tau_F$ as the uppermost and lowermost paths of $F$. 

\subsection*{Interchange} A 2-category consists of objects, morphisms between objects (1-cells) and morphisms between morphisms (2-cells) satisfying associativity and unitality axioms. One of the most important equality that follows from this axioms is the so called \emph{interchange law}, which states that for any pair of 2-cells $\alpha$ and $\beta$ of the form shown below left, we can apply them to the composite 1-cell $gf$ in any order and the resulting composite 2-cell will not change. This condition is expressed precisely by the commutativity of the diagram below right. 
\begin{equation}\label{eq:interchange-2-cats}
\begin{tikzcd}[ampersand replacement=\&]
	X \& Y \& Z
	\arrow[""{name=0, anchor=center, inner sep=0}, "f", curve={height=-12pt}, from=1-1, to=1-2]
	\arrow[""{name=1, anchor=center, inner sep=0}, "{f'}"', curve={height=12pt}, from=1-1, to=1-2]
	\arrow[""{name=2, anchor=center, inner sep=0}, "g", curve={height=-12pt}, from=1-2, to=1-3]
	\arrow[""{name=3, anchor=center, inner sep=0}, "{g'}"', curve={height=12pt}, from=1-2, to=1-3]
	\arrow["\alpha"{xshift=0.1cm}, shorten <=6pt, shorten >=6pt, Rightarrow, from=0, to=1]
	\arrow["\beta"{xshift=0.1cm}, shorten <=6pt, shorten >=6pt, Rightarrow, from=2, to=3]
\end{tikzcd} \hspace{2cm}
\begin{tikzcd}[ampersand replacement=\&]
	gf \& {gf'} \\
	{g'f} \& {g'f'}
	\arrow["1\circ\alpha", from=1-1, to=1-2]
	\arrow["{\beta\circ 1}"', from=1-1, to=2-1]
	\arrow["1\circ\alpha"', from=2-1, to=2-2]
	\arrow["\beta\circ1", from=1-2, to=2-2]
\end{tikzcd}  
\end{equation}
Using these axioms, and in particular the interchange law, it is possible to prove that all pasting diagrams in a 2-category have a unique composite \cite{AJPow90}. As a consequence, all calculations involving 2-cells in a 2-category can be done using 2-dimensional pasting diagrams. Unfortunately, increasing the dimension and considering weaker notions of higher categories this is not necessarily true, see for instance \cite{div:past-thm-gray}. In this paper we will focus on the 3-dimensional scenario, building on \cite{div:past-thm-gray}. We will show that, up to unique isomorphism, one can actually use 3-dimensional pasting diagrams for computations involving 3-cells in a Gray-category.  

\subsection*{Gray-categories}

In 3-dimensional category theory it is quite useful to work with Gray-categories, i.e.\ semistrict tricategories \cite[Section~4.8]{GordonR:coht}. These are easier to handle than tricategories, but sufficiently general for many purposes, since every tricategory is triequivalent to a Gray-category  \cite[Theorem~8.1]{GordonR:coht}. 

A Gray-category is a category enriched over the monoidal category of $2$-categories and strict $2$-functors equipped with the Gray tensor product, see  \cite[Section~3.2]{GurskiN:algtt} or \cite[Section~4]{GordonR:coht}. Unpacking this definition, a $\Gray$-category $\gk$ consists of the following data:
\begin{enumerate}[i)]
    \item a class of objects $\gk_0$,
    \item for each couple of objects $X$ and $Y$ in $\gk$, a $2$-category $\gk(X,Y)$,
    \item for every $X\in\gk$ an identity $1$-cell $1_X\colon X\to X$,
    \item a composition $2$-functor $c_{X,Y,Z}\colon\gk(Y,Z)\otimes\gk(X,Y)\to\gk(X,Z)$ satisfying associativity and unitality rules, where $\otimes$ here stands for the Gray-tensor product of 2-categories.
\end{enumerate}

The non-strictness of Gray-categories is a consequence of the definition of the Gray tensor product. In particular, this implies that, given 2-cells as in \eqref{eq:interchange-2-cats}, then instead of an equality there is an invertible 3-cell between the two composites, usually referred to as the \emph{interchange isomorphism} in a Gray-category. 
\begin{center}
\begin{tikzcd}[ampersand replacement=\&]
	gf \& {gf'} \\
	{g'f} \& {g'f'}
	\arrow[""{name=0, anchor=center, inner sep=0}, "1\circ\alpha", from=1-1, to=1-2]
	\arrow["{\beta\circ 1}"', from=1-1, to=2-1]
	\arrow[""{name=1, anchor=center, inner sep=0}, "1\circ\alpha"', from=2-1, to=2-2]
	\arrow["\beta\circ1", from=1-2, to=2-2]
	\arrow["{\gamma_{\beta,\alpha}}", shorten <=9pt, shorten >=9pt, Rightarrow, from=0, to=1]
\end{tikzcd} \hspace{0.5cm} $\in\gk(X,Z)$
\end{center}
Another way of saying this is that the interchange, for 2-cells, holds only up to coherent isomorphism. The interchange maps satisfy several equations, for which we refer the reader to \cite[Remark~1.2]{GambLob:psm}. We recall one of them below since it will be useful in \Cref{thm:easy-nat-3-cell}. For 1-cells $f\colon X\to Y$ and $g\colon Y\to Z$, 2-cells $\alpha,\alpha'\colon f\to f'$ and $\beta,\beta'\colon g\to g'$, and 3-cells $\Gamma\colon\alpha\to\alpha'$ and $\Delta\colon\beta\to\beta'$, the following equation holds. 
\begin{center}
\begin{tikzcd}[ampersand replacement=\&]
	gf \&\& {gf'} \\
	\\
	{g'f} \&\& {g'f'}
	\arrow[""{name=0, anchor=center, inner sep=0}, "1\circ\alpha"{description}, curve={height=-12pt}, from=1-1, to=1-3]
	\arrow[""{name=1, anchor=center, inner sep=0}, "{\beta'\circ 1}"', curve={height=12pt}, from=1-1, to=3-1]
	\arrow["{1\circ\alpha'}"', from=3-1, to=3-3]
	\arrow["\beta\circ1", from=1-3, to=3-3]
	\arrow[""{name=2, anchor=center, inner sep=0}, "{1\circ\alpha'}"{description}, curve={height=12pt}, from=1-1, to=1-3]
	\arrow["{\gamma_{\beta,\alpha'}}", shift left=2, shorten <=22pt, shorten >=22pt, Rightarrow, from=1-3, to=3-1]
	\arrow[""{name=3, anchor=center, inner sep=0}, "\beta\circ1"{description, pos=0.7}, curve={height=-12pt}, from=1-1, to=3-1]
	\arrow["1\circ\Gamma"{xshift=0.1cm}, shift right=3, shorten <=6pt, shorten >=6pt, Rightarrow, from=0, to=2]
	\arrow["\Delta\circ1"'{yshift=0.1cm}, shorten <=6pt, shorten >=6pt, Rightarrow, from=3, to=1]
\end{tikzcd} $=$
\begin{tikzcd}[ampersand replacement=\&]
	gf \&\& {gf'} \\
	\\
	{g'f} \&\& {g'f'}
	\arrow["1\circ\alpha"{description}, from=1-1, to=1-3]
	\arrow["{\beta'\circ 1}"', from=1-1, to=3-1]
	\arrow[""{name=0, anchor=center, inner sep=0}, "{1\circ\alpha'}"', curve={height=12pt}, from=3-1, to=3-3]
	\arrow[""{name=1, anchor=center, inner sep=0}, "\beta\circ1", curve={height=-12pt}, from=1-3, to=3-3]
	\arrow[""{name=2, anchor=center, inner sep=0}, "1\circ\alpha"{description}, curve={height=-12pt}, from=3-1, to=3-3]
	\arrow["{\gamma_{\beta',\alpha}}"', shorten <=22pt, shorten >=22pt, Rightarrow, from=1-3, to=3-1]
	\arrow[""{name=3, anchor=center, inner sep=0}, "{\beta'\circ 1}"{description, pos=0.3}, curve={height=12pt}, from=1-3, to=3-3]
	\arrow["1\circ\Gamma"{xshift=0.1cm}, shift right=3, shorten <=6pt, shorten >=6pt, Rightarrow, from=2, to=0]
	\arrow["\Delta\circ1"{yshift=-0.1cm}, shorten <=6pt, shorten >=6pt, Rightarrow, from=1, to=3]
\end{tikzcd}
\end{center}
Equivalently, we can divide this axiom in the two equations below. 
\begin{equation}\label{eq:interchange-1st-comp}
\begin{tikzcd}[ampersand replacement=\&]
	gf \&\& {gf'} \\
	{g'f} \&\& {g'f'}
	\arrow[""{name=0, anchor=center, inner sep=0}, "1\circ\alpha"{description}, curve={height=-12pt}, from=1-1, to=1-3]
	\arrow[""{name=1, anchor=center, inner sep=0}, "{1\circ\alpha'}"', curve={height=12pt}, from=2-1, to=2-3]
	\arrow["\beta\circ1", from=1-3, to=2-3]
	\arrow[""{name=2, anchor=center, inner sep=0}, "{1\circ\alpha'}"{description}, curve={height=12pt}, from=1-1, to=1-3]
	\arrow["\beta\circ1"', from=1-1, to=2-1]
	\arrow["1\circ\Gamma", shift right=3, shorten <=6pt, shorten >=6pt, Rightarrow, from=0, to=2]
	\arrow["{\gamma_{\beta,\alpha'}}", shorten <=9pt, shorten >=9pt, Rightarrow, from=2, to=1]
\end{tikzcd} =
\begin{tikzcd}[ampersand replacement=\&]
	gf \&\& {gf'} \\
	{g'f} \&\& {g'f'}
	\arrow[""{name=0, anchor=center, inner sep=0}, "1\circ\alpha"{description}, curve={height=-12pt}, from=1-1, to=1-3]
	\arrow["{\beta\circ 1}"', from=1-1, to=2-1]
	\arrow[""{name=1, anchor=center, inner sep=0}, "{1\circ\alpha'}"', curve={height=12pt}, from=2-1, to=2-3]
	\arrow[""{name=2, anchor=center, inner sep=0}, "1\circ\alpha"{description}, curve={height=-12pt}, from=2-1, to=2-3]
	\arrow["{\beta\circ 1}", from=1-3, to=2-3]
	\arrow["1\circ\Gamma", shift right=3, shorten <=6pt, shorten >=6pt, Rightarrow, from=2, to=1]
	\arrow["{\gamma_{\beta,\alpha}}", shorten <=9pt, shorten >=9pt, Rightarrow, from=0, to=2]
\end{tikzcd}
\end{equation}
\begin{equation}\label{eq:interchange-2nd-comp}
\begin{tikzcd}[ampersand replacement=\&]
	gf \&\& {gf'} \\
	{g'f} \&\& {g'f'}
	\arrow["1\circ\alpha", from=1-1, to=1-3]
	\arrow[""{name=0, anchor=center, inner sep=0}, "{\beta'\circ 1}"', curve={height=12pt}, from=1-1, to=2-1]
	\arrow["1\circ\alpha"', from=2-1, to=2-3]
	\arrow[""{name=1, anchor=center, inner sep=0}, "\beta\circ1", curve={height=-12pt}, from=1-3, to=2-3]
	\arrow[""{name=2, anchor=center, inner sep=0}, "{\beta'\circ 1}"'{pos=0.6}, curve={height=12pt}, from=1-3, to=2-3]
	\arrow["\Delta\circ1"'{yshift=0.1cm}, shift left=2, shorten <=5pt, shorten >=5pt, Rightarrow, from=1, to=2]
	\arrow["{\gamma_{\beta',\alpha}}"'{yshift=0.1cm}, shorten <=26pt, shorten >=26pt, Rightarrow, from=2, to=0]
\end{tikzcd} =
\begin{tikzcd}[ampersand replacement=\&]
	gf \&\& {gf'} \\
	{g'f} \&\& {g'f'}
	\arrow["1\circ\alpha"{description}, from=1-1, to=1-3]
	\arrow[""{name=0, anchor=center, inner sep=0}, "{\beta'\circ 1}"', curve={height=12pt}, from=1-1, to=2-1]
	\arrow[""{name=1, anchor=center, inner sep=0}, "\beta\circ1", curve={height=-12pt}, from=1-3, to=2-3]
	\arrow[""{name=2, anchor=center, inner sep=0}, "\beta\circ1"{pos=0.6}, curve={height=-12pt}, from=1-1, to=2-1]
	\arrow["1\circ\alpha"', from=2-1, to=2-3]
	\arrow["\Delta\circ1"'{yshift=0.1cm}, shift left, shorten <=5pt, shorten >=5pt, Rightarrow, from=2, to=0]
	\arrow["{\gamma_{\beta,\alpha}}"'{yshift=0.1cm}, shorten <=26pt, shorten >=26pt, Rightarrow, from=1, to=2]
\end{tikzcd}
\end{equation}

On the other hand, a Gray-category has hom-2-categories, so the interchange for 3-cells holds strictly. Precisely, for 3-cells $\Gamma\colon\alpha\Rrightarrow\alpha'$ and $\Delta\colon\beta\Rrightarrow\beta'$ as below left, the diagram below right commutes. 
\begin{center}
\begin{tabular}{cc|c}
    In $\gk$ &  In $\gk(X,Y)$ & \emph{Local Interchange}\\
\begin{tikzcd}[ampersand replacement=\&]
	X \&\& Y
	\arrow[""{name=0, anchor=center, inner sep=0}, "{f_1}", curve={height=-30pt}, from=1-1, to=1-3]
	\arrow[""{name=1, anchor=center, inner sep=0}, "{f_2}"{description}, from=1-1, to=1-3]
	\arrow[""{name=2, anchor=center, inner sep=0}, "{f_3}"', curve={height=30pt}, from=1-1, to=1-3]
	\arrow[""{name=3, anchor=center, inner sep=0}, "{\alpha'}", shift left=4, shorten <=6pt, shorten >=6pt, Rightarrow, from=0, to=1]
	\arrow[""{name=4, anchor=center, inner sep=0}, "\alpha"', shift right=4, shorten <=6pt, shorten >=6pt, Rightarrow, from=0, to=1]
	\arrow[""{name=5, anchor=center, inner sep=0}, "\beta"', shift right=4, shorten <=6pt, shorten >=6pt, Rightarrow, from=1, to=2]
	\arrow[""{name=6, anchor=center, inner sep=0}, "{\beta'}", shift left=4, shorten <=6pt, shorten >=6pt, Rightarrow, from=1, to=2]
	\arrow["\Gamma", triple, shorten <=4pt, shorten >=4pt, from=4, to=3]
	\arrow["\Delta"', triple, shorten <=4pt, shorten >=4pt, from=5, to=6]
\end{tikzcd} & 
\begin{tikzcd}[ampersand replacement=\&]
	{f_1} \& {f_2} \& {f_3}
	\arrow[""{name=0, anchor=center, inner sep=0}, "\alpha", curve={height=-12pt}, from=1-1, to=1-2]
	\arrow[""{name=1, anchor=center, inner sep=0}, "{\alpha'}"', curve={height=12pt}, from=1-1, to=1-2]
	\arrow[""{name=2, anchor=center, inner sep=0}, "\beta", curve={height=-12pt}, from=1-2, to=1-3]
	\arrow[""{name=3, anchor=center, inner sep=0}, "{\beta'}"', curve={height=12pt}, from=1-2, to=1-3]
	\arrow["\Gamma"{xshift=0.1cm}, shift right=1, shorten <=6pt, shorten >=6pt, Rightarrow, from=0, to=1]
	\arrow["\Delta"{xshift=0.1cm}, shift right=1, shorten <=6pt, shorten >=6pt, Rightarrow, from=2, to=3]
\end{tikzcd} 
&
\begin{tikzcd}[ampersand replacement=\&]
	\beta\cdot\alpha \& {\beta\cdot\alpha'} \\
	{\beta'\cdot\alpha} \& {\beta'\cdot\alpha'}
	\arrow["{1\cdot \Gamma}", from=1-1, to=1-2]
	\arrow["\Delta\cdot1"', from=1-1, to=2-1]
	\arrow["{1\cdot \Gamma}"', from=2-1, to=2-2]
	\arrow["\Delta\cdot1", from=1-2, to=2-2]
\end{tikzcd}
\end{tabular}
\end{center}
We will refer to this instance of the interchange as \emph{local interchange}. 

\subsection*{Gray-categorical Pasting Theorem}

Let $\gk$ be a Gray-category and $\pschG$ a pasting scheme. 
We recall that a \emph{labelling} $\labL$ of $\pschG$ in $\gk$ \cite[Definition~4.22]{div:past-thm-gray} is an assignment of a 0-cell in $\gk$ to each vertex of $\pschG$, a 1-cell to each edge and a 2-cell to each face of $\pschG$, in a way that preserves domains and codomains. \black 
In \cite[Lemma~4.23]{div:past-thm-gray} we get a canonical functor from the groupoid $\cc_\pschG$,
which encodes the different orders of composition of the pasting diagram given by $\labL$, \black 
$$\labL\colon \cc_\pschG\to\gk(S,T)[p,q]$$
where $S,T$ are the source and sink of $\pschG$ and $p,q$ the top and bottom paths. 
The idea is that this functor takes as input a possible order of composition and gives out the composite 2-cell in $\gk$. \black We will abuse notation by often referring to both the labelling and the canonical functor with the same letter $\labL$. 

In \cite[Theorem~4.24]{div:past-thm-gray} it is proven that there exists a unique isomorphism between two different possible composition of a labelling of a pasting scheme. We might refer to this with
$$\intchg_{X,Y}=\intchg\colon X=\labL(\overline{F})\to\labL(\overline{G})=Y$$
where $\overline{F}$ and $\overline{G}$ are the objects of $\cc_\pschG$ corresponding to the compositions. 
The aim of this paper is to show that the local interchange in a Gray-category is still true even if we change the composite of a pasting diagram at any point (\Cref{thm:easy-nat-3-cell,thm:easy-gen-loc-int}). Moreover, we will show how to apply this to the specific case of 3-cells acting on more than one 2-cell in a pasting diagram (\Cref{thm:hard-nat-3-cell,thm:hard-gen-loc-int}).

\subsection*{Applications} In addition to the key role played by Gray-categories in higher category theory, thanks to strictification results such as the classic \cite{GordonR:coht}, they are also important in homotopy theory since Gray-groupoids (i.e.\ Gray-categories where every $n$-morphism is invertible for $n=1,2,3$) model homotopy 3-types \cite{lack:gray_quillen}. Moreover, Gray-categories found applications also in mathematical physics, in particular in TQFT \cite{BMS_gray-duals,CARQUEVILLE2020107024,CarMue:orb-comp-3-cat} and higher gauge theory \cite{FMarPick:fund-gray-grp,wang20143}. The fail of the interchange leads to complications in the various calculations one has to perform when dealing with Gray-categories. This problem was addressed using Gray-categories with duals \cite[Definition~3.10]{BMS_gray-duals} with a \emph{geometric} approach. This paper provides a new approach to simplify calculations in Gray-categories, allowing one to work with 3-dimensional pasting diagrams. 


\section{1-to-1 Local Interchange}

In this section we will consider a labelling $\labL$ of a pasting scheme $\pschG$ in $\gk$, two 2-cells $\alpha$ and $\beta$ in the labelling corresponding to the faces $F_\alpha\neq F_\beta$ of the pasting scheme and two 3-cells $\Gamma\colon\alpha\Rrightarrow\alpha'$ and $\Delta\colon\beta\Rrightarrow\beta'$. 

\begin{notation*}
Given a labelling $\labL$ of a pasting scheme $\pschG$, we will denote with $\labL^{\alpha'}$ the labelling which has $\alpha'$ instead of $\alpha$ in the specified face $F_\alpha$ and coincides with $\labL$ everywhere else. 
\end{notation*}

\begin{rmk}
    Clearly given the notation above and two pairs of 2-cells $\alpha,\alpha'$ and $\beta,\beta'$, then
    $$(\labL^{\alpha'})^{\beta'}=(\labL^{\beta'})^{\alpha'}=:\labL^{\alpha',\beta'}.$$
\end{rmk}

\begin{theorem}\label{thm:easy-nat-3-cell}
    Any 3-cell $\Gamma\colon\alpha\Rrightarrow\alpha'$ induces a natural transformation
\[\begin{tikzcd}[ampersand replacement=\&]
	{\cc_\pschG} \&\& {\gk(S,T)[p,q].}
	\arrow[""{name=0, anchor=center, inner sep=0}, "\labL", shift left=1, curve={height=-15pt}, from=1-1, to=1-3]
	\arrow[""{name=1, anchor=center, inner sep=0}, "{\labL^{\alpha'}}"', shift right=1, curve={height=15pt}, from=1-1, to=1-3]
	\arrow["{\Gamma^\ast}"{xshift=0.1cm}, shorten <=15pt, shorten >=15pt, Rightarrow, from=0, to=1]
\end{tikzcd}\]
\end{theorem}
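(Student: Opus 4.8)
The plan is to build the natural transformation $\Gamma^\ast$ component by component and then to reduce its naturality to the basic interchange axioms \eqref{eq:interchange-1st-comp} and \eqref{eq:interchange-2nd-comp}, which is exactly why they were recalled above. First I would define the components. Fix an object $\overline{F}$ of $\cc_\pschG$, i.e.\ a chosen order of composition. The composite $2$-cells $\labL(\overline{F})$ and $\labL^{\alpha'}(\overline{F})$ are obtained from the labelling by iterating the composition $2$-functors $c_{X,Y,Z}$ together with the vertical compositions and whiskerings of the hom-$2$-categories, and they differ only in the single slot occupied by the face $F_\alpha$, where the former inserts $\alpha$ and the latter $\alpha'$. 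Regarding that slot as a variable and keeping every other $2$-cell fixed, the composite depends functorially on the $2$-cell placed there, since each operation involved acts on $3$-cells and is functorial in them. I would then define $\Gamma^\ast_{\overline{F}}\colon\labL(\overline{F})\Rrightarrow\labL^{\alpha'}(\overline{F})$ as the image of $\Gamma$ under this operation, i.e.\ $\Gamma$ whiskered into the composite at the position of $F_\alpha$; this is a morphism of $\gk(S,T)[p,q]$, as required.

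Next I would verify naturality. Recall that $\cc_\pschG$ is generated by the elementary moves that reorder the composition of two faces, and that $\labL$ and $\labL^{\alpha'}$ send each such move to a basic interchange isomorphism $\intchg$ (as encoded by the canonical functor of \cite[Lemma~4.23]{div:past-thm-gray}). Since naturality squares paste together along composites of morphisms and both $\labL,\labL^{\alpha'}$ are functors, it suffices to check the naturality square for a single elementary move $\intchg\colon\overline{F}\to\overline{G}$ swapping two faces.

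There are two cases. If the move does not involve $F_\alpha$, then the whiskered $\Gamma$ and the interchange $3$-cell act on disjoint portions of the composite, so the square commutes by functoriality of the composition operations together with the local interchange in the relevant hom-$2$-category: the two $3$-cells are independent and may be performed in either order. If instead the move swaps $F_\alpha$ with an adjacent face $F_\beta$ labelled by $\beta$, then, after whiskering away all the surrounding fixed data by identities, the naturality square becomes exactly one of the axioms \eqref{eq:interchange-1st-comp} or \eqref{eq:interchange-2nd-comp}, according to whether $F_\alpha$ lies on the inner or the outer $1$-cell of that interchange. Indeed, these equations state precisely that $\intchg$ is natural in the corresponding argument with respect to the $3$-cell $\Gamma$, which is the content of the square.

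I expect the main obstacle to be the bookkeeping in the first step and in case (b): one must decompose the formal composite so that $F_\alpha$ (together with $F_\beta$ in case (b)) appears as a single elementary interchange, whiskered by identities on all remaining faces, in order to invoke the axiom cleanly and identify the correct orientation. Once this local reduction is in place, the axioms \eqref{eq:interchange-1st-comp} and \eqref{eq:interchange-2nd-comp} close case (b), the local interchange closes case (a), and everything else is routine functoriality of composition.
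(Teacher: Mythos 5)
Your proposal matches the paper's proof in all essentials: the components of $\Gamma^\ast$ are defined by whiskering $\Gamma$ into the slot of $F_\alpha$, naturality is reduced to the generating swap morphisms of $\cc_\pschG$, and the case analysis (swap involving $F_\alpha$ as one of the two swapped faces, handled by \eqref{eq:interchange-1st-comp} or \eqref{eq:interchange-2nd-comp} depending on position, versus a swap disjoint from $F_\alpha$, handled by the $2$-categorical pasting theorem in the hom-$2$-category) is exactly the one carried out in the paper.
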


\begin{proof}
  For an object $\overline{F}=\overline{H}F_\alpha\overline{K}\in\cc_\pschG$,
  we define the component of $\Gamma^\ast$ at $\overline{F}$ to be the morphism  $\Gamma^\ast_{\overline{F}}\colon\labL(\overline{F})\to\labL^{\alpha'}(\overline{F})$ obtained by whiskering $\Gamma$ with the labelling of the faces in $\overline{H}$ and $\overline{K}$. The diagram below, which is in the 2-category $\gk(S,T)$, is a sketch of this map, where we write $\overline{(-)}$ for the appropriate whiskering with 1-cells. 

\[\begin{tikzcd}[ampersand replacement=\&]
	\bullet \& \bullet \& \bullet \& \bullet \& \bullet \& \bullet
	\arrow["{\overline{\alpha_1}}"{description}, from=1-1, to=1-2]
	\arrow["{\overline{\alpha_n}}"{description}, from=1-5, to=1-6]
	\arrow[""{name=0, anchor=center, inner sep=0}, "{\labL(\overline{F})}", curve={height=-50pt}, from=1-1, to=1-6]
	\arrow[""{name=1, anchor=center, inner sep=0}, "{\labL^{\alpha'}(\overline{F})}"', curve={height=50pt}, from=1-1, to=1-6]
	\arrow[""{name=2, anchor=center, inner sep=0}, "{\overline{\alpha}}", curve={height=-12pt}, from=1-3, to=1-4]
	\arrow[""{name=3, anchor=center, inner sep=0}, "{\overline{\alpha'}}"', curve={height=12pt}, from=1-3, to=1-4]
	\arrow["\ldots"{description}, draw=none, from=1-2, to=1-3]
	\arrow["\ldots"{description}, draw=none, from=1-4, to=1-5]
	\arrow["{:=}"{marking,xshift=-0.2cm}, draw=none, from=0, to=2]
	\arrow["{\overline{\Gamma}}"{xshift=0.1cm},shift right=1, shorten <=6pt, shorten >=6pt, Rightarrow, from=2, to=3]
	\arrow["{:=}"{marking,xshift=-0.2cm}, draw=none, from=1, to=3]
\end{tikzcd}\]
  The naturality of $\Gamma^\ast$ follows from the coherence equations for the interchange, as we will show next. The morphisms in $\cc_\pschG$ are generated by $\overline{U}\widehat{GH}\overline{V}\colon \overline{U}GH\overline{V}\to\overline{U}HG\overline{V}$, so it is enough to show naturality for these maps, i.e. 
\[\begin{tikzcd}[ampersand replacement=\&]
	{\labL(\overline{U}GH\overline{V})} \& {\labL^{\alpha'}(\overline{U}GH\overline{V})} \\
	{\labL(\overline{U}HG\overline{V})} \& {\labL^{\alpha'}(\overline{U}GH\overline{V}).}
	\arrow["\gamma"', from=1-1, to=2-1]
	\arrow["\gamma", from=1-2, to=2-2]
	\arrow["{\Gamma^\ast}"', from=2-1, to=2-2]
	\arrow["{\Gamma^\ast}", from=1-1, to=1-2]
\end{tikzcd}\]
Let us write $\sigma=\labL(G)$ and $\delta=\labL(H)$. All cases can be reduced to the following diagram.
\[\begin{tikzcd}[ampersand replacement=\&]
	S \& A \& B \& C \& D \& T
	\arrow["f"{description}, from=1-1, to=1-2]
	\arrow[""{name=0, anchor=center, inner sep=0}, "{a}", curve={height=-12pt}, from=1-2, to=1-3]
	\arrow[""{name=1, anchor=center, inner sep=0}, "{a'}"', curve={height=12pt}, from=1-2, to=1-3]
	\arrow[""{name=2, anchor=center, inner sep=0}, "g"{description}, from=1-3, to=1-4]
	\arrow[""{name=3, anchor=center, inner sep=0}, "{b}", curve={height=-12pt}, from=1-4, to=1-5]
	\arrow[""{name=4, anchor=center, inner sep=0}, "{b'}"', curve={height=12pt}, from=1-4, to=1-5]
	\arrow["h"{description}, from=1-5, to=1-6]
	\arrow[""{name=5, anchor=center, inner sep=0}, "p", curve={height=-40pt}, from=1-1, to=1-6]
	\arrow[""{name=6, anchor=center, inner sep=0}, "q"', curve={height=40pt}, from=1-1, to=1-6]
	\arrow["\phi", shorten <=8pt, shorten >=8pt, Rightarrow, from=5, to=2]
	\arrow["\psi", shorten <=8pt, shorten >=8pt, Rightarrow, from=2, to=6]
	\arrow["\sigma", shorten <=6pt, shorten >=6pt, Rightarrow, from=0, to=1]
	\arrow["\delta", shorten <=6pt, shorten >=6pt, Rightarrow, from=3, to=4]
\end{tikzcd}\]

  \begin{itemize}
      \item If $F_\alpha=G$, and so $\alpha=\sigma$, the wanted equality becomes the equality between the following pasting diagrams in the 2-category $\gk(S,T)$.
\begin{center}
\scalebox{0.9}{
\hspace{-1.5cm}
\begin{tikzcd}[ampersand replacement=\&]
	p \& hbgaf \&\& {hbga'f} \\
	\& {hb'gaf} \&\& {hb'ga'f} \& q
	\arrow["\psi"', from=2-4, to=2-5]
	\arrow["\phi", from=1-1, to=1-2]
	\arrow[""{name=0, anchor=center, inner sep=0}, "{1\circ\alpha'\circ1}"{description}, curve={height=12pt}, from=1-2, to=1-4]
	\arrow[""{name=1, anchor=center, inner sep=0}, "1\circ\alpha\circ1", curve={height=-12pt}, from=1-2, to=1-4]
	\arrow["1\circ\delta\circ1", from=1-4, to=2-4]
	\arrow["1\circ\delta\circ1"', from=1-2, to=2-2]
	\arrow[""{name=2, anchor=center, inner sep=0}, "{1\circ\alpha'\circ1}"', curve={height=12pt}, from=2-2, to=2-4]
	\arrow["{1\circ \Gamma\circ 1}", shift right=4, shorten <=6pt, shorten >=6pt, Rightarrow, from=1, to=0]
	\arrow["{\gamma_{\delta g,\alpha'}}", shorten <=9pt, shorten >=9pt, Rightarrow, from=0, to=2]
\end{tikzcd}$=$
\begin{tikzcd}[ampersand replacement=\&]
	p \& hbgaf \&\& {hbga'f} \\
	\& {hb'gaf} \&\& {hb'ga'f} \& q
	\arrow["\psi"', from=2-4, to=2-5]
	\arrow["\phi", from=1-1, to=1-2]
	\arrow[""{name=0, anchor=center, inner sep=0}, "1\circ\alpha\circ1", curve={height=-12pt}, from=1-2, to=1-4]
	\arrow["1\circ\delta\circ1", from=1-4, to=2-4]
	\arrow["1\circ\delta\circ1"', from=1-2, to=2-2]
	\arrow[""{name=1, anchor=center, inner sep=0}, "{1\circ\alpha'\circ1}"', curve={height=12pt}, from=2-2, to=2-4]
	\arrow[""{name=2, anchor=center, inner sep=0}, "1\circ\alpha\circ1"{description}, curve={height=-12pt}, from=2-2, to=2-4]
	\arrow["{1\circ \Gamma\circ 1}", shift right=3, shorten <=6pt, shorten >=6pt, Rightarrow, from=2, to=1]
	\arrow["{\gamma_{\delta g,\alpha}}", shorten <=9pt, shorten >=9pt, Rightarrow, from=0, to=2]
\end{tikzcd} 
}
\end{center} 
 This is true by axiom \eqref{eq:interchange-1st-comp} of the interchange isomorphism in the Gray-category $\gk$.
      \item If $F_\alpha=H$, and so $\alpha=\delta$, the wanted equality becomes
\begin{center}
\scalebox{0.9}{
\hspace{-1.5cm}
\begin{tikzcd}[ampersand replacement=\&]
	p \& hbgaf \& {hbga'f} \\
	\\
	\& {hb'gaf} \& {hb'ga'f} \& q
	\arrow["\rho"', from=3-3, to=3-4]
	\arrow["\phi", from=1-1, to=1-2]
	\arrow["1\circ\sigma\circ1", from=1-2, to=1-3]
	\arrow[""{name=0, anchor=center, inner sep=0}, "{1\circ\alpha'\circ1}"{description, pos=0.7}, curve={height=15pt}, from=1-3, to=3-3]
	\arrow[""{name=1, anchor=center, inner sep=0}, "{1\circ\alpha'\circ1}"', curve={height=15pt}, from=1-2, to=3-2]
	\arrow["1\circ\sigma\circ1"', from=3-2, to=3-3]
	\arrow[""{name=2, anchor=center, inner sep=0}, "1\circ\alpha\circ1", curve={height=-15pt}, from=1-3, to=3-3]
	\arrow["{\gamma_{\alpha',g\sigma}}"'{yshift=0.1cm}, shorten <=20pt, shorten >=20pt, Rightarrow, from=0, to=1]
	\arrow["1\circ\Gamma\circ1"'{yshift=0.1cm}, shift left, shorten <=6pt, shorten >=6pt, Rightarrow, from=2, to=0]
\end{tikzcd} $=$
\begin{tikzcd}[ampersand replacement=\&]
	p \& hbgaf \& {hbga'f} \\
	\\
	\& {hb'gaf} \& {hb'ga'f} \& q
	\arrow["\rho"', from=3-3, to=3-4]
	\arrow["\phi", from=1-1, to=1-2]
	\arrow["1\circ\sigma\circ1", from=1-2, to=1-3]
	\arrow[""{name=0, anchor=center, inner sep=0}, "{1\circ\alpha'\circ1}"', curve={height=15pt}, from=1-2, to=3-2]
	\arrow["1\circ\sigma\circ1"', from=3-2, to=3-3]
	\arrow[""{name=1, anchor=center, inner sep=0}, "1\circ\alpha\circ1"{pos=0.6}, curve={height=-15pt}, from=1-3, to=3-3]
	\arrow[""{name=2, anchor=center, inner sep=0}, "1\circ\alpha\circ1"{description, pos=0.7}, curve={height=-15pt}, from=1-2, to=3-2]
	\arrow["{\gamma_{\alpha,g\sigma}}"'{yshift=0.1cm}, shorten <=20pt, shorten >=20pt, Rightarrow, from=1, to=2]
	\arrow["1\circ\Gamma\circ1"'{yshift=0.1cm}, shorten <=6pt, shorten >=6pt, Rightarrow, from=2, to=0]
\end{tikzcd}
}
\end{center}
which is true by axiom \eqref{eq:interchange-2nd-comp} of the interchange isomorphism $\gamma$ in the Gray-category $\gk$. 

      \item If $F_\alpha\neq G$ and $F_\alpha\neq H$, then $F_\alpha$ is either in $\overline{U}$ or $\overline{V}$. The two cases are completely analogous, so let us assume $F_\alpha\in\overline{U}$. So we assume $\alpha=\phi$ and the equality is true because both correspond to the following pasting diagram in $\gk(S,T)$. 
\[
\scalebox{0.9}{\begin{tikzcd}[ampersand replacement=\&]
	\&\&\& {hbga'f} \\
	p \&\& hbgaf \&\& {hb'ga'f} \& q \\
	\&\&\& {hb'gaf}
	\arrow["\psi"', from=2-5, to=2-6]
	\arrow[""{name=0, anchor=center, inner sep=0}, "\alpha", curve={height=-20pt}, from=2-1, to=2-3]
	\arrow["1\circ\sigma\circ1", from=2-3, to=1-4]
	\arrow["1\circ\delta\circ1", from=1-4, to=2-5]
	\arrow["1\circ\delta\circ1"', from=2-3, to=3-4]
	\arrow["1\circ\sigma\circ1"', from=3-4, to=2-5]
	\arrow[""{name=1, anchor=center, inner sep=0}, "{\alpha'}"', curve={height=20pt}, from=2-1, to=2-3]
	\arrow["{\gamma_{\delta g,\alpha}}", shorten <=13pt, shorten >=13pt, Rightarrow, from=1-4, to=3-4]
	\arrow["\Gamma", shorten <=6pt, shorten >=6pt, Rightarrow, from=0, to=1]
\end{tikzcd}}\]
      
  \end{itemize}
\end{proof}

This theorem is saying that we can apply $\Gamma$ to any composite of the labelling and furthermore, given two composites $X$ and $X'$ we can either apply $\Gamma$ and then the unique interchange isomorphisms or the other way around. 
\[\begin{tikzcd}[ampersand replacement=\&]
	X \& {X'} \\
	Y \& {Y'}
	\arrow["\gamma", from=1-1, to=1-2]
	\arrow["{\Gamma^\ast}"', from=1-1, to=2-1]
	\arrow["\gamma"', from=2-1, to=2-2]
	\arrow["{\Gamma^\ast}", from=1-2, to=2-2]
\end{tikzcd}\]
The following theorem gives a first \emph{generalised local interchange} result for Gray-categories.
\begin{theorem}\label{thm:easy-gen-loc-int}
    Given two 3-cells $\Gamma\colon\alpha\Rrightarrow\alpha'$ and $\Delta\colon\beta\Rrightarrow\beta'$, then

\begin{center}

\scalebox{0.9}{
\begin{tikzcd}[ampersand replacement=\&]
	\& {} \\
	{\cc_{\pschG}} \&\& {\gk(S,T)[p,q]} \\
	\& {}
	\arrow["\labL", curve={height=-24pt}, from=2-1, to=2-3]
	\arrow[""{name=0, anchor=center, inner sep=0}, "{\labL^{\alpha'}}"{description}, from=2-1, to=2-3]
	\arrow["{\labL^{\alpha',\beta'}}"', curve={height=24pt}, from=2-1, to=2-3]
	\arrow["{\Gamma^\ast}"{xshift=0.1cm,yshift=-0.1cm}, shorten <=12pt, shorten >=8pt, Rightarrow, from=1-2, to=0]
	\arrow["{\Delta^\ast}"{xshift=0.1cm,yshift=0.1cm}, shorten <=8pt, shorten >=12pt, Rightarrow, from=0, to=3-2]
\end{tikzcd} $=$
\begin{tikzcd}[ampersand replacement=\&]
	\& {} \\
	{\cc_{\pschG}} \&\& {\gk(S,T)[p,q]} \\
	\& {}
	\arrow["\labL", curve={height=-24pt}, from=2-1, to=2-3]
	\arrow[""{name=0, anchor=center, inner sep=0}, "{\labL^{\beta'}}"{description}, from=2-1, to=2-3]
	\arrow["{\labL^{\alpha',\beta'}}"', curve={height=24pt}, from=2-1, to=2-3]
	\arrow["{\Delta^\ast}"{xshift=0.1cm,yshift=-0.1cm}, shorten <=12pt, shorten >=8pt, Rightarrow, from=1-2, to=0]
	\arrow["{\Gamma^\ast}"{xshift=0.1cm,yshift=0.1cm}, shorten <=8pt, shorten >=12pt, Rightarrow, from=0, to=3-2]
\end{tikzcd} 
}
\end{center}

\end{theorem}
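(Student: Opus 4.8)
The plan is to reduce the claimed identity of natural transformations to a componentwise statement and then to recognise each component as an instance of the strict interchange in a single hom-$2$-category.

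First I would make both sides well-typed. On the left, $\Gamma^\ast\colon\labL\Rightarrow\labL^{\alpha'}$ is the transformation of \Cref{thm:easy-nat-3-cell}, and it is composed with a transformation $\Delta^\ast\colon\labL^{\alpha'}\Rightarrow\labL^{\alpha',\beta'}$; this second one is exactly what \Cref{thm:easy-nat-3-cell} produces when applied to the labelling $\labL^{\alpha'}$ together with the $3$-cell $\Delta$. Here it is essential that $F_\alpha\neq F_\beta$: since $\alpha$ and $\alpha'$ are parallel $2$-cells, passing from $\labL$ to $\labL^{\alpha'}$ leaves the face $F_\beta$ and all of its surrounding whiskering data unchanged, so $\Delta$ may be whiskered in exactly as before. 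Symmetrically, the right-hand side uses $\Delta^\ast\colon\labL\Rightarrow\labL^{\beta'}$ followed by $\Gamma^\ast\colon\labL^{\beta'}\Rightarrow\labL^{\alpha',\beta'}$, again legitimate by \Cref{thm:easy-nat-3-cell}. As vertical composition of natural transformations is computed objectwise, it then suffices to fix an object $\overline F\in\cc_\pschG$ and to show that the two composite $3$-cells $\labL(\overline F)\Rightarrow\labL^{\alpha',\beta'}(\overline F)$ coincide.

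Next I would unwind these components using the description from the proof of \Cref{thm:easy-nat-3-cell}. For fixed $\overline F$, write $\labL(\overline F)$ as a horizontal composite of the whiskered faces inside the strict $2$-category $\gk(S,T)$. In these terms $\Gamma^\ast_{\overline F}$ is the $2$-cell of $\gk(S,T)$ carrying $\overline{\Gamma}$ in the factor of $F_\alpha$ and an identity in every other factor, and $\Delta^\ast_{\overline F}$ carries $\overline{\Delta}$ in the factor of $F_\beta$ and identities elsewhere. Because $F_\alpha\neq F_\beta$ these two factors are distinct, and—crucially—replacing a $2$-cell label does not alter the position of any face in the chosen composition order, so the four labellings $\labL,\labL^{\alpha'},\labL^{\beta'},\labL^{\alpha',\beta'}$ all present the same factorisation.

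Finally I would conclude by strict interchange. Both composites now live entirely inside the genuine $2$-category $\gk(S,T)$, where the middle-four interchange (the \emph{local interchange} recalled in the introduction) holds on the nose. Computing factor by factor, the left composite yields $1_{\overline{\alpha'}}\cdot\overline{\Gamma}=\overline{\Gamma}$ in the factor of $F_\alpha$ and $\overline{\Delta}\cdot 1_{\overline{\beta}}=\overline{\Delta}$ in the factor of $F_\beta$, while the right composite yields $\overline{\Gamma}\cdot 1_{\overline{\alpha}}=\overline{\Gamma}$ and $1_{\overline{\beta'}}\cdot\overline{\Delta}=\overline{\Delta}$ in those same factors, with identities everywhere else in both cases. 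Hence both equal the single $2$-cell of $\gk(S,T)$ carrying $\overline{\Gamma}$ in the factor of $F_\alpha$, $\overline{\Delta}$ in the factor of $F_\beta$, and identities elsewhere, which gives the componentwise equality and therefore the theorem. I expect the only real difficulty to be organisational rather than mathematical: all the Gray-categorical interchange isomorphisms $\intchg$ have already been absorbed into the naturality statement of \Cref{thm:easy-nat-3-cell}, so that once a composition order $\overline F$ is fixed the comparison becomes strict; the work is simply to keep the bookkeeping of factors consistent across the four labellings.
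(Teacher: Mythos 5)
Your proposal is correct and takes essentially the same route as the paper: reduce to components at a fixed object $\overline{F}\in\cc_\pschG$, note that all four labellings present the same factorisation in the hom-2-category $\gk(S,T)$, and conclude by the strict (local) interchange there. The paper phrases this last step as an instance of the 2-categorical pasting theorem in $\gk(S,T)$ and then exhibits exactly the factor-by-factor interchange computation you describe.
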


\begin{proof}
    Let us consider an element $\overline{F}\in\cc_\pschG$ and let us assume, without loss of generality, that is of the form $\overline{F}=\overline{U}F_\alpha\overline{V}F_\beta\overline{W}$ (the case where $F_\beta$ appears before $F_\alpha$ is completely analogous). Before getting to the precise calculations, we will give some intuition of what is going on. Below we write $\labL(\overline{U})$ for the composite of the images of the 2-cells in the image of $\overline{U}$. Then, the equality we are looking for becomes the following equation. 
    \begin{center}
\begin{tikzcd}[ampersand replacement=\&]
	\bullet \& \bullet \& \bullet \& \bullet \& \bullet \& \bullet
	\arrow[""{name=0, anchor=center, inner sep=0}, "{\overline{\alpha}}", shift left, curve={height=-18pt}, from=1-2, to=1-3]
	\arrow[""{name=1, anchor=center, inner sep=0}, "{\overline{\alpha'}}"', shift right, from=1-2, to=1-3]
	\arrow["{\labL(\overline{U})}", from=1-1, to=1-2]
	\arrow["{\labL(\overline{V})}", from=1-3, to=1-4]
	\arrow["{\labL(\overline{W})}", from=1-5, to=1-6]
	\arrow[""{name=2, anchor=center, inner sep=0}, "{\overline{\beta'}}"', shift right, curve={height=18pt}, from=1-4, to=1-5]
	\arrow[""{name=3, anchor=center, inner sep=0}, "{\overline{\beta}}", shift left, from=1-4, to=1-5]
	\arrow["{\overline{\Gamma}}"{xshift=0.1cm}, shift right, shorten <=6pt, shorten >=6pt, Rightarrow, from=0, to=1]
	\arrow["{\overline{\Delta}}"{xshift=0.1cm}, shift right, shorten <=6pt, shorten >=6pt, Rightarrow, from=3, to=2]
\end{tikzcd}\\
$=$ \\
\begin{tikzcd}[ampersand replacement=\&]
	\bullet \& \bullet \& \bullet \& \bullet \& \bullet \& \bullet
	\arrow[""{name=0, anchor=center, inner sep=0}, "{\overline{\alpha}}", shift left, from=1-2, to=1-3]
	\arrow[""{name=1, anchor=center, inner sep=0}, "{\overline{\alpha'}}"', shift right, curve={height=18pt}, from=1-2, to=1-3]
	\arrow["{\labL(\overline{U})}", from=1-1, to=1-2]
	\arrow["{\labL(\overline{V})}", from=1-3, to=1-4]
	\arrow["{\labL(\overline{W})}", from=1-5, to=1-6]
	\arrow[""{name=2, anchor=center, inner sep=0}, "{\overline{\beta'}}"', shift right, from=1-4, to=1-5]
	\arrow[""{name=3, anchor=center, inner sep=0}, "{\overline{\beta}}", shift left, curve={height=-18pt}, from=1-4, to=1-5]
	\arrow["{\overline{\Gamma}}"{xshift=0.1cm}, shift right, shorten <=6pt, shorten >=6pt, Rightarrow, from=0, to=1]
	\arrow["{\overline{\Delta}}"{xshift=0.1cm}, shift right, shorten <=6pt, shorten >=6pt, Rightarrow, from=3, to=2]
\end{tikzcd}
    \end{center}
    This follows directly from the pasting theorem for 2-categories, since these are two composite of the same pasting diagram in the 2-category $\gk(S,T)$. \black We will show the explicit calculation for the pasting diagram below.
\[\begin{tikzcd}[ampersand replacement=\&]
	S \& A \& B \& C \& D \& T
	\arrow["f"{description}, from=1-1, to=1-2]
	\arrow[""{name=0, anchor=center, inner sep=0}, "{a}", curve={height=-12pt}, from=1-2, to=1-3]
	\arrow[""{name=1, anchor=center, inner sep=0}, "{a'}"', curve={height=12pt}, from=1-2, to=1-3]
	\arrow[""{name=2, anchor=center, inner sep=0}, "g"{description}, from=1-3, to=1-4]
	\arrow[""{name=3, anchor=center, inner sep=0}, "{b}", curve={height=-12pt}, from=1-4, to=1-5]
	\arrow[""{name=4, anchor=center, inner sep=0}, "{b'}"', curve={height=12pt}, from=1-4, to=1-5]
	\arrow["h"{description}, from=1-5, to=1-6]
	\arrow[""{name=5, anchor=center, inner sep=0}, "p", curve={height=-40pt}, from=1-1, to=1-6]
	\arrow[""{name=6, anchor=center, inner sep=0}, "q"', curve={height=40pt}, from=1-1, to=1-6]
	\arrow["\phi", shorten <=8pt, shorten >=8pt, Rightarrow, from=5, to=2]
	\arrow["\psi", shorten <=8pt, shorten >=8pt, Rightarrow, from=2, to=6]
	\arrow["\alpha"{xshift=0.1cm}, shorten <=6pt, shorten >=6pt, Rightarrow, from=0, to=1]
	\arrow["\beta"{xshift=0.1cm}, shorten <=6pt, shorten >=6pt, Rightarrow, from=3, to=4]
\end{tikzcd}\]
 In this case, the wanted equation is true because it becomes the local interchange for the following 3-cells. 
\[\begin{tikzcd}[ampersand replacement=\&]
	p \& hbgaf \&\& {hbga'f} \&\& {hb'ga'f} \& q
	\arrow["\psi", from=1-6, to=1-7]
	\arrow["\phi", from=1-1, to=1-2]
	\arrow[""{name=0, anchor=center, inner sep=0}, "1\circ\alpha\circ1", curve={height=-12pt}, from=1-2, to=1-4]
	\arrow[""{name=1, anchor=center, inner sep=0}, "1\circ\beta\circ1", curve={height=-12pt}, from=1-4, to=1-6]
	\arrow[""{name=2, anchor=center, inner sep=0}, "{1\circ\alpha'\circ1}"', curve={height=12pt}, from=1-2, to=1-4]
	\arrow[""{name=3, anchor=center, inner sep=0}, "{1\circ\beta'\circ1}"', curve={height=12pt}, from=1-4, to=1-6]
	\arrow["1\circ\Gamma\circ1", shift right=4, shorten <=6pt, shorten >=6pt, Rightarrow, from=0, to=2]
	\arrow["1\circ\Delta\circ1", shift right=4, shorten <=6pt, shorten >=6pt, Rightarrow, from=1, to=3] 
\end{tikzcd}\]
\end{proof}

\begin{eg}
    \Cref{thm:easy-gen-loc-int} for the pasting scheme 
    \begin{center}
\begin{tikzcd}[ampersand replacement=\&]
	\bullet \&\& \bullet
	\arrow[""{name=0, anchor=center, inner sep=0}, shift left=1, curve={height=-24pt}, from=1-1, to=1-3]
	\arrow[""{name=1, anchor=center, inner sep=0}, from=1-1, to=1-3]
	\arrow[""{name=2, anchor=center, inner sep=0}, shift right=1, curve={height=24pt}, from=1-1, to=1-3]
	\arrow["F_\alpha"{description}, draw=none, from=0, to=1]
	\arrow["F_\beta"{description}, draw=none, from=1, to=2]
\end{tikzcd}
\end{center}
is exactly the normal local interchange for Gray-categories. 
\end{eg}

\begin{rmk}
\Cref{thm:easy-gen-loc-int} says that given a 2-dimensional pasting diagram and a choice of composite of it, then we can apply two 3-cells (on different faces) in any order we like. Moreover, using \Cref{thm:easy-nat-3-cell}, we can see that this remains true even if we change the composite at any point with the unique invertible 3-cell given by the Gray-categorical pasting theorem \cite[Theorem~4.24]{div:past-thm-gray}. 

More formally, we consider seven different possible composites of a labelling $\labL$ and we write $X,Y,A,B,C,D$ for $X:=\labL\overline{K_X}, Y:=\labL\overline{K_Y}$, etc. When we change the labelling using the various $\labL^{\alpha'},\labL^{\beta'}$ and $\labL^{\alpha',\beta'}$ we will indicate it with sup/sub-scripts. For instance $X^{\alpha'}:=\labL^{\alpha'}(\overline{K_X})$, $Y_{\beta'}:=\labL^{\beta'}(\overline{K_Y})$ and $A^{\alpha'}_{\beta'}:=\labL^{\alpha'}(\overline{K_A})$. From various applications of \Cref{thm:easy-nat-3-cell}, \Cref{thm:easy-gen-loc-int} and the uniqueness of the isomorphism between different composites of a labelling, we can deduce that the following diagram is commutative. 
\[\begin{tikzcd}[ampersand replacement=\&]
	X \& A \& {A^{\alpha'}} \& {B^{\alpha'}} \& {B^{\alpha'}_{\beta'}} \& {Y^{\alpha'}_{\beta'}} \\
	\& C \& {C_{\beta'}} \& {D_{\beta'}} \& {D_{\beta'}^{\alpha'}}
	\arrow["\gamma", from=1-1, to=1-2]
	\arrow["\gamma"', from=1-1, to=2-2]
	\arrow["{\Gamma^\ast}", from=1-2, to=1-3]
	\arrow["\gamma", from=1-3, to=1-4]
	\arrow["{\Delta^\ast}", from=1-4, to=1-5]
	\arrow["{\Delta^\ast}"', from=2-2, to=2-3]
	\arrow["\gamma"', from=2-3, to=2-4]
	\arrow["{\Gamma^\ast}"', from=2-4, to=2-5]
	\arrow["\gamma", from=1-5, to=1-6]
	\arrow["\gamma"', from=2-5, to=1-6]
\end{tikzcd}\]
The commutativity of the diagrams (2), (3), (4) and (7) follows by \Cref{thm:easy-nat-3-cell}, (1) and (5) commute by \cite[Theorem~4.24]{div:past-thm-gray}, and finally (6) by \Cref{thm:easy-gen-loc-int}.
\begin{center}
    \scalebox{0.8}{
\begin{tikzcd}[ampersand replacement=\&]
	\& A \& {A^{\alpha'}} \& {B^{\alpha'}} \& {B^{\alpha'}_{\beta'}} \\
	X \& B \&\&\&\& {Y^{\alpha'}_{\beta'}} \\
	\& D \&\& {D^{\alpha'}} \\
	\& C \& {C_{\beta'}} \& {D_{\beta'}} \& {D_{\beta'}^{\alpha'}}
	\arrow["\gamma", from=2-1, to=1-2]
	\arrow["\gamma"', from=2-1, to=4-2]
	\arrow["{\Gamma^\ast}", from=1-2, to=1-3]
	\arrow["\gamma", from=1-3, to=1-4]
	\arrow["{\Delta^\ast}", from=1-4, to=1-5]
	\arrow["{\Delta^\ast}"', from=4-2, to=4-3]
	\arrow[""{name=0, anchor=center, inner sep=0}, "\gamma"', from=4-3, to=4-4]
	\arrow[""{name=1, anchor=center, inner sep=0}, "{\Gamma^\ast}"', from=4-4, to=4-5]
	\arrow["\gamma", from=1-5, to=2-6]
	\arrow["\gamma"', from=4-5, to=2-6]
	\arrow["\gamma"{description}, from=1-2, to=2-2]
	\arrow[""{name=2, anchor=center, inner sep=0}, "{\Gamma^\ast}"{description}, from=2-2, to=1-4]
	\arrow["{\Delta^\ast}"{description}, from=3-2, to=4-4]
	\arrow[""{name=3, anchor=center, inner sep=0}, "\gamma"{description}, from=3-2, to=4-2]
	\arrow[""{name=4, anchor=center, inner sep=0}, "\gamma"{description}, from=2-2, to=3-2]
	\arrow[""{name=5, anchor=center, inner sep=0}, "\gamma"{description}, from=1-5, to=4-5]
	\arrow[""{name=6, anchor=center, inner sep=0}, "{\Gamma^\ast}"{description}, from=3-2, to=3-4]
	\arrow["{\Delta^\ast}"{description}, from=3-4, to=4-5]
	\arrow[""{name=7, anchor=center, inner sep=0}, "\gamma"{description}, from=1-4, to=3-4]
	\arrow["{(1)}"{description}, draw=none, from=2-1, to=4]
	\arrow["{(3)}"{description}, draw=none, from=2, to=6]
	\arrow["{(4)}"{description}, draw=none, from=7, to=5]
	\arrow["{(5)}"{description}, draw=none, from=5, to=2-6]
	\arrow["{(6)}"{description}, draw=none, from=6, to=1]
	\arrow["{(7)}"{description, pos=0.4}, draw=none, from=3, to=0]
	\arrow["{(2)}"{description}, draw=none, from=1-2, to=2]
\end{tikzcd}
}
\end{center}

\end{rmk}

\section{2-to-1 Local Interchange}

In this section we will consider something that happens quite often in practice: when a 3-cell acts on more than one face of a pasting diagram. For instance, let us consider the 2-dimensional pasting below. 
\[\begin{tikzcd}[ampersand replacement=\&]
	\&\& A \\
	S \& B \&\& T \\
	\&\& C
	\arrow[""{name=0, anchor=center, inner sep=0}, "{p_1}", curve={height=-12pt}, from=2-1, to=1-3]
	\arrow["{p_2}", shift left=1, curve={height=-6pt}, from=1-3, to=2-4]
	\arrow[""{name=1, anchor=center, inner sep=0}, "{q_1}"', curve={height=12pt}, from=2-1, to=3-3]
	\arrow["{g_1}"{description}, from=2-1, to=2-2]
	\arrow["h"{description}, from=2-2, to=3-3]
	\arrow["f"{description}, from=2-2, to=1-3]
	\arrow[""{name=2, anchor=center, inner sep=0}, "{g_2}"{description}, from=2-2, to=2-4]
	\arrow["{q_2}"', curve={height=6pt}, from=3-3, to=2-4]
	\arrow["{\alpha_1}"'{pos=0.6}, shorten <=6pt, shorten >=4pt, Rightarrow, from=0, to=2-2]
	\arrow["{\beta_1}"'{pos=0.4,yshift=-0.1cm}, shorten <=4pt, shorten >=6pt, Rightarrow, from=2-2, to=1]
	\arrow["{\alpha_2}"{xshift=0.1cm}, shorten <=6pt, shorten >=6pt, Rightarrow, from=1-3, to=2]
	\arrow["{\beta_2}"{xshift=0.1cm}, shorten <=6pt, shorten >=6pt, Rightarrow, from=2, to=3-3]
\end{tikzcd}\]
We want to show that any 3-cell $\Pi\colon(\alpha_2\circ g_1)\cdot(p_2\circ\alpha_1)\Rrightarrow\alpha'$, induces a natural map as below. 
\begin{center}
\scalebox{0.9}{
\begin{tikzcd}[ampersand replacement=\&]
	\&\& A \\
	S \& B \&\& T \\
	\&\& C
	\arrow["{p_2}", shift left=1, curve={height=-6pt}, from=1-3, to=2-4]
	\arrow[""{name=0, anchor=center, inner sep=0}, "{q_1}"', curve={height=12pt}, from=2-1, to=3-3]
	\arrow["{g_1}"{description}, from=2-1, to=2-2]
	\arrow["h"{description}, from=2-2, to=3-3]
	\arrow["f"{description}, from=2-2, to=1-3]
	\arrow[""{name=1, anchor=center, inner sep=0}, "{g_2}"{description}, from=2-2, to=2-4]
	\arrow["{q_2}"', curve={height=6pt}, from=3-3, to=2-4]
	\arrow[""{name=2, anchor=center, inner sep=0}, "{p_1}", curve={height=-12pt}, from=2-1, to=1-3]
	\arrow["{\beta_1}"', shorten <=4pt, shorten >=6pt, Rightarrow, from=2-2, to=0]
	\arrow["{\alpha_2}"{xshift=0.1cm}, shorten <=6pt, shorten >=6pt, Rightarrow, from=1-3, to=1]
	\arrow["{\beta_2}"{xshift=0.1cm}, shorten <=6pt, shorten >=6pt, Rightarrow, from=1, to=3-3]
	\arrow["{\alpha_1}"', shorten <=6pt, shorten >=4pt, Rightarrow, from=2, to=2-2]
\end{tikzcd}
    \hspace{0.5cm}$\overset{\Pi^\ast}{\Rrightarrow}$\hspace{0.5cm}
\begin{tikzcd}[ampersand replacement=\&]
	\&\& A \\
	S \& B \&\& T \\
	\&\& C
	\arrow["{p_2}", shift left, curve={height=-6pt}, from=1-3, to=2-4]
	\arrow[""{name=0, anchor=center, inner sep=0}, "{q_1}"', curve={height=12pt}, from=2-1, to=3-3]
	\arrow["{g_1}"{description}, from=2-1, to=2-2]
	\arrow["h"{description}, from=2-2, to=3-3]
	\arrow[""{name=1, anchor=center, inner sep=0}, "{g_2}"{description}, from=2-2, to=2-4]
	\arrow["{q_2}"', curve={height=6pt}, from=3-3, to=2-4]
	\arrow["{p_1}", curve={height=-12pt}, from=2-1, to=1-3]
	\arrow["{\beta_1}"'{pos=0.4}, shorten <=4pt, shorten >=6pt, Rightarrow, from=2-2, to=0]
	\arrow["{\beta_2}"{xshift=0.1cm}, shorten <=6pt, shorten >=6pt, Rightarrow, from=1, to=3-3]
	\arrow["{\alpha'}"{xshift=0.1cm}, shift right=5, shorten <=6pt, shorten >=6pt, Rightarrow, from=1-3, to=1]
\end{tikzcd}
}
\end{center}

Now, we need to formalise when two faces $F,G$ of a pasting scheme $\pschG$, given a labelling, result in \emph{composable 2-cells}. In other words, we want to describe when $F\lhd_\pschG G$, i.e. when $\tau_F$ and $\sigma_G$ share at least an edge (see \cite[Definition~4.8]{div:past-thm-gray}), and there exists at least an element of the form $\overline{H}FG\overline{K}\in\cc_\pschG$. We will see that the hypothesis $F\lhd_\pschG G$ will avoid scenarios where the composition is not uniquely determined, such as the pasting scheme below. 
\[\begin{tikzcd}[ampersand replacement=\&]
	\bullet \& \bullet \& \bullet
	\arrow[""{name=0, anchor=center, inner sep=0}, curve={height=-12pt}, from=1-1, to=1-2]
	\arrow[""{name=1, anchor=center, inner sep=0}, curve={height=-12pt}, from=1-2, to=1-3]
	\arrow[""{name=2, anchor=center, inner sep=0}, curve={height=12pt}, from=1-1, to=1-2]
	\arrow[""{name=3, anchor=center, inner sep=0}, curve={height=12pt}, from=1-2, to=1-3]
	\arrow["F"{description}, draw=none, from=0, to=2]
	\arrow["G"{description}, draw=none, from=1, to=3]
\end{tikzcd}\]
We will discuss briefly this special case in \Cref{sec:2-1-special-case}. Before diving in \Cref{lem:F-G-composable}, which will give a complete description of all the possible cases, let us recall some notation. For two faces $F$ and $G$ of a pasting scheme $\pschG$, we write $F\prec_\pschG G$ if and only if there exists a (possibly empty) path from $t_F$ to $s_G$ (see \cite[Definition~4.10]{div:past-thm-gray}). We also denote with ${\lhd_{\pschG}^{t}}$ the transitive closure of ${\lhd_{\pschG}}$. \black

\begin{lemma}\label{lem:F-G-composable}
    Let $\pschG$ be a pasting scheme and $F,G$ two faces. If there exists at least an element of the form $\overline{H}FG\overline{K}\in\cc_\pschG$, then one of the following two conditions holds: 
    \begin{enumerate}[(a)]
        \item $F\prec_\pschG G$ or $G\prec_\pschG F$;
        \item the face obtained by the union of $F$ and $G$ has one of the following shapes 
    \begin{center}
    \scalebox{0.9}{
\begin{tikzcd}[ampersand replacement=\&]
	\bullet \& \bullet \&\& \bullet \& \bullet
	\arrow[""{name=0, anchor=center, inner sep=0}, from=1-2, to=1-4]
	\arrow[from=1-4, to=1-5]
	\arrow[from=1-1, to=1-2]
	\arrow[""{name=1, anchor=center, inner sep=0}, shift right=1, curve={height=24pt}, from=1-1, to=1-5]
	\arrow[""{name=2, anchor=center, inner sep=0}, curve={height=-24pt}, from=1-2, to=1-4]
	\arrow["G"{description}, draw=none, from=0, to=1]
	\arrow["F"{description}, draw=none, from=2, to=0]
\end{tikzcd}, 
\begin{tikzcd}[ampersand replacement=\&]
	\bullet \& \bullet \&\& \bullet \& \bullet
	\arrow[""{name=0, anchor=center, inner sep=0}, from=1-2, to=1-4]
	\arrow[from=1-1, to=1-2]
	\arrow[""{name=1, anchor=center, inner sep=0}, shift right=1, curve={height=18pt}, from=1-1, to=1-4]
	\arrow[""{name=2, anchor=center, inner sep=0}, curve={height=-18pt}, from=1-2, to=1-5]
	\arrow[from=1-4, to=1-5]
	\arrow["G"{description}, draw=none, from=0, to=1]
	\arrow["F"{description}, draw=none, from=2, to=0]
\end{tikzcd}, }\\
\scalebox{0.9}{
\begin{tikzcd}[ampersand replacement=\&]
	\bullet \& \bullet \&\& \bullet \& \bullet
	\arrow[""{name=0, anchor=center, inner sep=0}, from=1-2, to=1-4]
	\arrow[from=1-1, to=1-2]
	\arrow[from=1-4, to=1-5]
	\arrow[""{name=1, anchor=center, inner sep=0}, shift right=1, curve={height=18pt}, from=1-2, to=1-5]
	\arrow[""{name=2, anchor=center, inner sep=0}, curve={height=-18pt}, from=1-1, to=1-4]
	\arrow["G"{description}, draw=none, from=0, to=1]
	\arrow["F"{description}, draw=none, from=2, to=0]
\end{tikzcd}} and \scalebox{0.9}{
\begin{tikzcd}[ampersand replacement=\&]
	\bullet \& \bullet \&\& \bullet \& \bullet
	\arrow[""{name=0, anchor=center, inner sep=0}, from=1-2, to=1-4]
	\arrow[from=1-1, to=1-2]
	\arrow[from=1-4, to=1-5]
	\arrow[""{name=1, anchor=center, inner sep=0}, shift right=1, curve={height=18pt}, from=1-2, to=1-4]
	\arrow[""{name=2, anchor=center, inner sep=0}, curve={height=-18pt}, from=1-1, to=1-5]
	\arrow["G"{description}, draw=none, from=0, to=1]
	\arrow["F"{description}, draw=none, from=2, to=0]
\end{tikzcd}}.
\end{center}
    \end{enumerate}
\end{lemma}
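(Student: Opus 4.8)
The plan is to extract geometric information about the relative position of $F$ and $G$ from the single combinatorial hypothesis that they occur consecutively in some composition order, and then to split into a ``horizontal'' and a ``vertical'' case. Recall from \cite{div:past-thm-gray} that an object $\overline{H}FG\overline{K}\in\cc_\pschG$ records a linear order of the faces of $\pschG$, compatible with the pasting scheme, in which $F$ is composed and then $G$ is composed with no face in between. The real content of the hypothesis is thus that \emph{no third face is forced to lie between $F$ and $G$}, and this is the property I will use to exclude the pathological configurations.

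First I would dispose of case~(a): if there is a (possibly empty) directed path from $t_F$ to $s_G$ or from $t_G$ to $s_F$, then $F\prec_\pschG G$ or $G\prec_\pschG F$ by definition and we are done. So assume no such path exists. The first step is then to show that $F$ and $G$ must be $\lhd$-related. If their boundaries met only at a single vertex $x$, planarity and acyclicity would force that vertex to be $t_F=s_G$ (or symmetrically), giving an empty path and hence case~(a), contrary to assumption; and if they met in no vertex at all, neither face could ``unlock'' the other, so some face would be obstructed between them, contradicting consecutiveness. Hence $\tau_F$ and $\sigma_G$ share at least an edge, and since $F$ precedes $G$ the relation is oriented as $F\lhd_\pschG G$.

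The heart of the argument is the local analysis of the overlap $\tau_F\cap\sigma_G$. I would first argue that this overlap is a single connected arc, say from a vertex $u$ to a vertex $w$: were it to break into two or more disjoint arcs, the plane region enclosed between two consecutive components would trap at least one interior face, which would then have to be composed strictly between $F$ and $G$, again contradicting the existence of $\overline{H}FG\overline{K}$. I would then examine the two ends of the arc independently. At the left end $u$ I claim that at most one of $\tau_F,\sigma_G$ continues past $u$: they cannot both leave $u$ along distinct edges, for that would once more trap a face to the left and obstruct it between $F$ and $G$. The same holds at the right end $w$. Recording, on each side, which of $\sigma_G$ and $\tau_F$ extends past the endpoint (the case where neither does being the degenerate one in which the corresponding outer edge $v_1\to v_2$ or $v_3\to v_4$ collapses) produces exactly the $2\times2=4$ configurations drawn in the statement.

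The step I expect to be the main obstacle is making the recurring phrase ``a face is trapped and must be composed strictly between $F$ and $G$'' into a rigorous statement. Concretely, this means identifying, from the plane embedding, the interior faces lying in the region bounded by the relevant arcs, and translating ``must be composed between $F$ and $G$'' back into the combinatorics of $\cc_\pschG$ and its generating swaps $\widehat{GH}$, so as to contradict the consecutiveness of $F$ and $G$. Once this obstruction statement is in place, both the connectedness of the overlap and the endpoint dichotomy follow, and the enumeration of the four shapes is a routine finite check.
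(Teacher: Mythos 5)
Your proposal follows essentially the same route as the paper: dispose of case (a), establish $F\lhd_\pschG G$, show that the overlap $\tau_F\cap\sigma_G$ is a single connected arc and that at each of its two ends at most one of $\tau_F,\sigma_G$ continues past it, each time by exhibiting a trapped face $U$ with $F\lhd^t_\pschG U\lhd^t_\pschG G$ --- which is exactly the contradiction you anticipated needing, since objects of $\cc_\pschG$ are strict linear extensions of $\lhd_\pschG$ and so no such $U$ can exist when $F$ and $G$ are consecutive. The one place you are thinner than the paper is the reduction of ``not (a)'' to ``$F\lhd_\pschG G$'': instead of your ad hoc casework on shared vertices, the paper invokes the cited equivalence of (a) with ``$F\cancel{\lhd_{\pschG}^{t}}G$ and $G\cancel{\lhd_{\pschG}^{t}}F$'' (Proposition~4.14 of \cite{div:past-thm-gray}), rules out $G\lhd^t_\pschG F$ because $G$ follows $F$ in the word, and collapses $F\lhd^t_\pschG G$ to $F\lhd_\pschG G$ by consecutiveness.
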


\begin{proof}
    First, we recall that an object of $\cc_\pschG$ is a strict linear extension of $\lhd_\pschG$ (see \cite[Definition~4.15]{div:past-thm-gray}), so if there exists $\overline{H}FG\overline{K}\in\cc_\pschG$ then there cannot be any face $U\in\pschG_2$ such that $F\lhd^t_\pschG U \lhd^t_\pschG G$. Moreover, since $G$ comes after $F$ in the object $\overline{H}FG\overline{K}$, it is not possible that $G\lhd_\pschG^t F$, hence $G\cancel{\lhd_{\pschG}^{t}}F$. \\

    If also $F\cancel{\lhd_{\pschG}^{t}}G$, then it must follow (a), since in \cite[Proposition~4.14]{div:past-thm-gray} it is proven that (a) is indeed equivalent to the statement
    \begin{center}
        $F\cancel{\lhd_{\pschG}^{t}} G$ and $G\cancel{\lhd_{\pschG}^{t}} F$. \\
    \end{center}

    Now, we have left to assume $F\lhd_\pschG^t G$ and prove that (b) holds. First, we notice that actually $F\lhd_\pschG G$, since $\lhd_\pschG^t$ is the transitive closure of $\lhd_\pschG$ but there cannot be any face $U$ such that $F\lhd^t_\pschG U \lhd^t_\pschG G$. 
    
    Then, $F\lhd_\pschG G$ means that $\tau_F$, the lowermost path of $F$, and $\sigma_G$, the uppermost path of $G$ share at least one edge. 
    This implies that the intersection $\tau_F\cap\sigma_G$ must be a connected path. In fact, if we assume $\tau_F\cap\sigma_G$ not connected, there must exist a face $U\in\pschG_2$ such that $F\lhd^t_\pschG U \lhd^t_\pschG G$ as shown below. 
\[\begin{tikzcd}[ampersand replacement=\&]
	\bullet \&\& \bullet \&\& \bullet \&\& \bullet
	\arrow[""{name=0, anchor=center, inner sep=0}, "{\tau_F\cap\sigma_G}"{description}, draw={rgb,255:red,214;green,92;blue,92}, from=1-1, to=1-3]
	\arrow[""{name=1, anchor=center, inner sep=0}, "{\tau_F\setminus\sigma_G}"{description}, curve={height=-12pt}, from=1-3, to=1-5]
	\arrow[""{name=2, anchor=center, inner sep=0}, "{\sigma_G\setminus\tau_F}"{description}, curve={height=12pt}, from=1-3, to=1-5]
	\arrow[""{name=3, anchor=center, inner sep=0}, "{\sigma_F}", curve={height=-40pt}, from=1-1, to=1-7]
	\arrow[""{name=4, anchor=center, inner sep=0}, "{\tau_F\cap\sigma_G}"{description}, draw={rgb,255:red,214;green,92;blue,92}, from=1-5, to=1-7]
	\arrow[""{name=5, anchor=center, inner sep=0}, "{\tau_G}"', curve={height=40pt}, from=1-1, to=1-7]
	\arrow["U"{description}, draw=none, from=1, to=2]
	\arrow["G"{description}, draw=none, from=0, to=5]
	\arrow["F"{description}, draw=none, from=3, to=4]
\end{tikzcd}\]
Now we will show that the following two conditions must hold:
\begin{enumerate}[(i)]
    \item either $s_F\in\tau_F\cap\sigma_G$ or $s_G\in\tau_F\cap\sigma_G$;
    \item either $t_F\in\tau_F\cap\sigma_G$ or $t_G\in\tau_F\cap\sigma_G$.
\end{enumerate}
    The proof for both is analogous, so we will show only (i), i.e. either $s_F$ or $s_G$ must be in $\tau_F\cap\sigma_G$. Let us assume that neither are in the intersection $\tau_F\cap\sigma_G$ and let us consider the paths $p_F$ and $p_G$ connecting $s$, the source of $\pschG$, to $s_F$ and $s_G$. Then, we obtain the following picture (where $\delta_F$ and $\delta_G$ are the paths connecting $s_F$ and $s_G$ to the start of the intersection). 
\[\begin{tikzcd}[ampersand replacement=\&]
	\& {s_F} \&\&\&\& \bullet \\
	s \&\& \bullet \&\& \bullet \\
	\& {s_G} \&\&\&\& \bullet
	\arrow[""{name=0, anchor=center, inner sep=0}, "{\tau_F\cap\sigma_G}"{description}, draw={rgb,255:red,214;green,92;blue,92}, from=2-3, to=2-5]
	\arrow["{\delta_F}"{description}, dashed, from=1-2, to=2-3]
	\arrow["{\delta_G}"{description}, dashed, from=3-2, to=2-3]
	\arrow["{p_G}"', squiggly, from=2-1, to=3-2]
	\arrow["U"{description}, draw=none, from=1-2, to=3-2]
	\arrow["{p_F}", squiggly, from=2-1, to=1-2]
	\arrow["{...}"{description}, draw=none, from=2-5, to=1-6]
	\arrow[""{name=1, anchor=center, inner sep=0}, "{\sigma_F}", from=1-2, to=1-6]
	\arrow["{...}"{description}, draw=none, from=2-5, to=3-6]
	\arrow[""{name=2, anchor=center, inner sep=0}, "{\tau_G}"', from=3-2, to=3-6]
	\arrow["F"{description}, draw=none, from=1, to=0]
	\arrow["G"{description}, draw=none, from=0, to=2]
\end{tikzcd}\]
   Then, $p_F,p_G,\delta_F$ and $\delta_G$ form a face $U$ such that $F\lhd_\pschG U\lhd_\pschG G$, which contradicts the hypothesis.

   The four different combinations of (i) and (ii) correspond to the possible pictures in (b). For instance, the first picture correspond to the case $s_F,t_F\in\tau_F\cap\sigma_G$. 
    \end{proof}
\begin{notation*}
    Given two faces $F$ and $G$ of a pasting scheme, if $F\lhd_\pschG G$ and they satisfy the hypothesis of \Cref{lem:F-G-composable}, i.e. (b) holds, we will write $F\lessdot G$.  
\end{notation*}

Starting from a pasting scheme $\pschG$, we will now define a new one that coincides with $\pschG$ everywhere except that it \emph{glues} together $F$ and $G$. 
\begin{notation*}
\begin{itemize}
\item[] 
    \item   For a pasting scheme $\pschG$ we will denote with $\pschG_0,\pschG_1,\pschG_2$ the sets of vertices, edges and faces, respectively. 

    \item For a pasting scheme $\pschG$, we write with $int(\pschG_0):=\pschG_0\setminus\lbrace s,t\rbrace$ where we recall that $s$ and $t$ are the source and sink (i.e. target) of the pasting scheme. 

    \item Since each face of a pasting scheme can be seen itself as a pasting scheme, we will use the same notation as above for faces too. For instance, let $\pschG$ be a pasting scheme and $F\in\pschG_2$, then we will write $F_0$ for the set of vertices surrounding $F$ and $int(F_0):=F_0\setminus\lbrace s_F,t_F\rbrace$. 
\end{itemize}
 
\end{notation*}

\begin{defn}
    Given a pasting scheme $\pschG$ and $F\lessdot G\in\pschG_2$, we define the pasting scheme $\pschG^{FG}$ as:
    \begin{itemize}
        \item $(\pschG^{FG})_0:=\pschG_0\setminus(int(F_0)\cap int(G_0))$;
        \item $(\pschG^{FG})_1:=\pschG_1\setminus(F_1\cap G_1)$;
        \item $(\pschG^{FG})_2:=(\pschG_2\setminus\lbrace F,G\rbrace)\amalg\lbrace F\#G\rbrace$ where the vertices and edges of $F\# G$ \black are defined as
        \begin{center}
            $(F\#G)_0:=(F_0\cup G_0)\setminus(int(F_0)\cap int(G_0))$ and \vspace{0.1cm}\\
            $(F\#G)_1:=F_1\Delta G_1 =(F_1\cup G_1)\setminus (F_1\cap G_1)$ \hspace{0.5cm}
        \end{center} (we recall that $\Delta$ usually denotes the symmetric difference of two sets). 
    \end{itemize}
\end{defn}
\begin{eg}
    Below we give examples of pasting schemes $\pschG$ and the resulting $\pschG^{FG}$.
    \begin{center}
      \begin{tabular}{c|c}
         $\pschG$ & $\pschG^{FG}$ \\
         \hline
\begin{tikzcd}[ampersand replacement=\&]
	\bullet \&\& \bullet
	\arrow[""{name=0, anchor=center, inner sep=0}, shift left=1, curve={height=-24pt}, from=1-1, to=1-3]
	\arrow[""{name=1, anchor=center, inner sep=0}, from=1-1, to=1-3]
	\arrow[""{name=2, anchor=center, inner sep=0}, shift right=1, curve={height=24pt}, from=1-1, to=1-3]
	\arrow["F"{description}, draw=none, from=0, to=1]
	\arrow["G"{description}, draw=none, from=1, to=2]
\end{tikzcd} & 
\begin{tikzcd}[ampersand replacement=\&]
	\bullet \&\& \bullet
	\arrow[""{name=0, anchor=center, inner sep=0}, shift left=1, curve={height=-24pt}, from=1-1, to=1-3]
	\arrow[""{name=1, anchor=center, inner sep=0}, shift right=1, curve={height=24pt}, from=1-1, to=1-3]
	\arrow["{F\#G}"{description}, draw=none, from=0, to=1]
\end{tikzcd}\\
\begin{tikzcd}[ampersand replacement=\&]
	\& \bullet \\
	\bullet \&\& \bullet \& \bullet
	\arrow[""{name=0, anchor=center, inner sep=0}, from=2-1, to=2-3]
	\arrow[""{name=1, anchor=center, inner sep=0}, from=2-3, to=2-4]
	\arrow[""{name=2, anchor=center, inner sep=0}, shift right=1, curve={height=24pt}, from=2-1, to=2-4]
	\arrow[curve={height=-6pt}, from=2-1, to=1-2]
	\arrow[curve={height=-6pt}, from=1-2, to=2-3]
	\arrow[""{name=3, anchor=center, inner sep=0}, shift left=1, curve={height=-12pt}, from=1-2, to=2-4]
	\arrow["F"{description}, draw=none, from=1-2, to=0]
	\arrow["H"{description}, draw=none, from=3, to=1]
	\arrow["G"{description}, draw=none, from=0, to=2]
\end{tikzcd} & 
\begin{tikzcd}[ampersand replacement=\&]
	\& \bullet \\
	\bullet \&\& \bullet \& \bullet
	\arrow[""{name=0, anchor=center, inner sep=0}, from=2-3, to=2-4]
	\arrow[""{name=1, anchor=center, inner sep=0}, shift right=1, curve={height=24pt}, from=2-1, to=2-4]
	\arrow[""{name=2, anchor=center, inner sep=0}, curve={height=-6pt}, from=2-1, to=1-2]
	\arrow[curve={height=-6pt}, from=1-2, to=2-3]
	\arrow[""{name=3, anchor=center, inner sep=0}, shift left=1, curve={height=-12pt}, from=1-2, to=2-4]
	\arrow["H"{description}, draw=none, from=3, to=0]
	\arrow["{F\#G}"{description}, draw=none, from=2, to=1]
\end{tikzcd}\\
\begin{tikzcd}[ampersand replacement=\&]
	\& \bullet \\
	\bullet \&\& \bullet \& \bullet
	\arrow[""{name=0, anchor=center, inner sep=0}, from=2-1, to=2-3]
	\arrow[""{name=1, anchor=center, inner sep=0}, from=2-3, to=2-4]
	\arrow[""{name=2, anchor=center, inner sep=0}, shift right=1, curve={height=24pt}, from=2-1, to=2-4]
	\arrow[curve={height=-6pt}, from=2-1, to=1-2]
	\arrow[curve={height=-6pt}, from=1-2, to=2-3]
	\arrow[""{name=3, anchor=center, inner sep=0}, shift left=1, curve={height=-12pt}, from=1-2, to=2-4]
	\arrow["F"{description}, draw=none, from=1-2, to=0]
	\arrow["G"{description}, draw=none, from=3, to=1]
	\arrow["H"{description}, draw=none, from=0, to=2]
\end{tikzcd}   & 
\begin{tikzcd}[ampersand replacement=\&]
	\& \bullet \\
	\bullet \&\& \bullet \& \bullet
	\arrow[""{name=0, anchor=center, inner sep=0}, from=2-1, to=2-3]
	\arrow[from=2-3, to=2-4]
	\arrow[""{name=1, anchor=center, inner sep=0}, shift right=1, curve={height=24pt}, from=2-1, to=2-4]
	\arrow[curve={height=-6pt}, from=2-1, to=1-2]
	\arrow[""{name=2, anchor=center, inner sep=0}, shift left=1, curve={height=-12pt}, from=1-2, to=2-4]
	\arrow["H"{description}, draw=none, from=0, to=1]
	\arrow["{F\#G}"{description}, draw=none, from=2, to=0]
\end{tikzcd}
    \end{tabular}  
    \end{center} 
    \end{eg}

\begin{prop}
    There is an equivalence $S_{FG}\colon\cc_{\pschG^{FG}}\to\cc_\pschG$.
\end{prop}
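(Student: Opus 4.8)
The plan is to build $S_{FG}$ by \emph{expanding} the glued face and then to read off the equivalence formally. On objects, an element $\overline{K}\in\cc_{\pschG^{FG}}$ is a strict linear extension of $\lhd_{\pschG^{FG}}$, i.e.\ a linear order on the faces of $\pschG^{FG}$ refining $\lhd_{\pschG^{FG}}^t$; I would send it to the string $S_{FG}(\overline{K})$ obtained by replacing the single symbol $F\#G$ with the pair $FG$, written in this order because $F\lhd_\pschG G$. The first thing to verify is that this is again a strict linear extension of $\lhd_\pschG$, so that $S_{FG}(\overline{K})$ genuinely lies in $\cc_\pschG$.

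The heart of the matter is therefore a comparison lemma between the two order relations, stating that gluing $F$ and $G$ contracts them to a single element without disturbing anything else. Concretely, for faces $W,W'\notin\{F,G\}$ one should have $W\lhd_{\pschG^{FG}}W'\iff W\lhd_\pschG W'$, since only the edges $F_1\cap G_1$ and the interior vertices $int(F_0)\cap int(G_0)$ are deleted and these are internal to the glued region, so no edge shared by two \emph{other} faces is touched; and $W\lhd_{\pschG^{FG}}(F\#G)\iff (W\lhd_\pschG F \text{ or } W\lhd_\pschG G)$, dually on the other side. Here the classification of the four shapes in \Cref{lem:F-G-composable} is exactly what guarantees that the boundary paths $\sigma_{F\#G},\tau_{F\#G}$ are the expected concatenations of those of $F$ and $G$. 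Combined with the fact extracted in the proof of \Cref{lem:F-G-composable}, namely that no face $U$ satisfies $F\lhd_\pschG^t U\lhd_\pschG^t G$, this shows that inserting $F$ immediately before $G$ violates no constraint, so $S_{FG}(\overline{K})$ is a valid linear extension.

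Next I would define $S_{FG}$ on the generating morphisms. A swap $\overline{U}\widehat{WW'}\overline{V}$ not involving the glued face goes to the corresponding swap of $W,W'$ in $\pschG$, while a swap exchanging $F\#G$ with a neighbouring face $W$ goes to the composite of the two swaps that move $W$ past $F$ and then past $G$ (or the reverse, depending on the side). The comparison lemma shows that $W$ is $\lhd_{\pschG^{FG}}^t$-incomparable to $F\#G$ exactly when it is incomparable to both $F$ and $G$, so both intermediate swaps are legal morphisms of $\cc_\pschG$ and the composite has the prescribed source and target. Functoriality is then immediate: since $\pschG^{FG}$ is again a pasting scheme, both $\cc_\pschG$ and $\cc_{\pschG^{FG}}$ are connected groupoids with a unique morphism between any two objects (the property underlying the Gray-categorical pasting theorem), so once the object assignment is fixed there is a unique functor lying over it, and our assignment on generators, having the correct endpoints, must coincide with it.

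Finally, the equivalence is formal. Both groupoids are nonempty, connected and thin, so $S_{FG}$ is automatically fully faithful, the induced map between singleton hom-sets being a bijection. For essential surjectivity it already suffices that the image is nonempty, but one sees it concretely too: given any linear extension of $\pschG$ one may drag $G$ leftwards by swaps past the faces separating it from $F$ --- permissible precisely because no face is forced between $F$ and $G$ --- until $F$ and $G$ are adjacent, landing in the image of $S_{FG}$, and the swaps supply the required isomorphism. I expect the genuine obstacle to be the comparison lemma of the second paragraph: checking, shape by shape from \Cref{lem:F-G-composable}, that deleting the shared edges and interior vertices neither creates nor destroys any $\lhd_\pschG$-relation among the surviving faces. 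That is the one truly combinatorial step; the rest is bookkeeping together with formal groupoid theory.
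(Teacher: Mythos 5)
Your proposal follows essentially the same route as the paper: the same object assignment (replace $F\#G$ by the adjacent pair $FG$), the same definition on generating swaps (a swap against $F\#G$ becomes the composite of the two swaps against $F$ and $G$), and the same conclusion via the fact that any functor between contractible groupoids is an equivalence. The only difference is that you spell out the well-definedness of the object assignment (the comparison of $\lhd_{\pschG^{FG}}$ with $\lhd_\pschG$), a point the paper's proof leaves implicit; your sketch of that step is consistent with what \Cref{lem:F-G-composable} provides.
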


\begin{proof}
    Since any functor between contractible groupoids is an equivalence, it is enough to define a functor $S_{FG}$. 
    
   For any object $\overline{U}(F\#G)\overline{V}\in\cc_{\pschG^{FG}}$ we define $S_{FG}(\overline{U}(F\#G)\overline{V}):=\overline{U}FG\overline{V}$. Regarding maps, it suffices to define the functor on generating morphisms. Using notation as in \cite[Definitions~4.5]{div:past-thm-gray}, let us consider the generating morphism $.\widehat{HK}.:=\overline{U}\widehat{HK}\overline{V}\colon\overline{U}HK\overline{V}\to\overline{U}KH\overline{V}\in\cc_{\pschG^{FG}}$. \black 
   \begin{itemize}
       \item If $F\#G\notin\lbrace H,K\rbrace$, then $.\widehat{HK}.$ is sent to the same morphism whiskered by $FG$ instead of $F\#G$. 
       \item If $F\#G\in\lbrace H,K\rbrace$, we define 
       \begin{center}
     $S_{FG}(.\widehat{(F\#G)K}.):=.\widehat{FK}.\circ.\widehat{GK}.$ and \\
     $S_{FG}(.\widehat{H(F\#G)}.):=.\widehat{HG}.\circ.\widehat{HF}.$  
       \end{center} 
   \end{itemize} 
\end{proof}



We underline that, since the categories $\cc_\pschG$ and $\cc_{\pschG^{FG}}$ have a finite number of objects, there always exists a pseudo-inverse of $S_{FG}$. 
Moreover, since both $\cc_\pschG$ and $\cc_{\pschG^{FG}}$ are contractible groupoids, we can describe the possible pseudo-inverses more explicitly. 

\begin{rmk}
   Fixing a pseudo-inverse $J_{FG}\colon\cc_\pschG\to\cc_{\pschG^{FG}}$ of $S_{FG}$ consists of choosing for any object $\overline{H}\in\cc_\pschG$ an object $\overline{H}_{FG}\in\cc_\pschG$ of the form $\overline{H}_{FG}=\overline{U}FG\overline{V}$, $\overline{H}_{FG}:=S_{FG}J_{FG}(\overline{H})$. Then, functoriality follows directly by the contractibility of the codomain, since for any pair of objects there exists a unique map between them.  
\end{rmk}

\begin{prop}\label{prop:re-labelling-F+G}
    For any pasting scheme $\pschG$ and any labelling $\labL$ in a Gray-category $\gk$, there exists a labelling $\labL_{FG}$ of $\pschG^{FG}$ such that for any pseudo-inverse $J_{FG}=J\colon\cc_\pschG\to\cc_{\pschG^{FG}}$ of $S_{FG}$ there is a unique natural isomorphism as below.  
    \begin{center}
\begin{tikzcd}[ampersand replacement=\&]
	{\cc_{\pschG}} \&\& {\gk(S,T)[p,q]} \\
	{\cc_{\pschG^{FG}}}
	\arrow["\labL", from=1-1, to=1-3]
	\arrow["J"', from=1-1, to=2-1]
	\arrow[""{name=0, anchor=center, inner sep=0}, "{\labL_{FG}}"', from=2-1, to=1-3]
	\arrow["\cong"{description}, draw=none, from=1-1, to=0]
\end{tikzcd}
    \end{center}
    Moreover, for any $\alpha=\labL(H_\alpha)$ with $H_\alpha\in\pschG_2\setminus\lbrace F,G\rbrace$ and $\alpha'$ a 2-cell parallel to $\alpha$, then $(\labL_{FG})^{\alpha'}={(\labL^{\alpha'})}_{FG}$. 
\end{prop}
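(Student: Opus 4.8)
The plan is to construct $\labL_{FG}$ by hand, show that it factors $\labL$ through $S_{FG}$ \emph{strictly}, and then transport the canonical pseudo-inverse isomorphism along $\labL$. First I would define the labelling $\labL_{FG}$ of $\pschG^{FG}$: on the vertices $(\pschG^{FG})_0\subseteq\pschG_0$ and edges $(\pschG^{FG})_1\subseteq\pschG_1$ it is the restriction of $\labL$, and on every face of $\pschG^{FG}$ other than $F\#G$ (that is, on $\pschG_2\setminus\lbrace F,G\rbrace$) it again agrees with $\labL$. The only new datum is the $2$-cell assigned to the glued face, which I would define to be the composite of $\labL(F)$ and $\labL(G)$ along their shared edges. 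Checking that this yields a legitimate labelling amounts to verifying that the domain and codomain of this composite are the $1$-cells read off the uppermost and lowermost paths $\sigma_{F\#G}$ and $\tau_{F\#G}$; this is exactly where the hypothesis $F\lessdot G$ and the four shapes of \Cref{lem:F-G-composable}(b) enter, since they guarantee that the shared path $\tau_F\cap\sigma_G$ cancels in the symmetric difference $F_1\Delta G_1$ and that the remaining boundaries concatenate correctly.

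The key step is then to show that $\labL_{FG}\circ S_{FG}=\labL$ as functors $\cc_{\pschG^{FG}}\to\gk(S,T)[p,q]$. On an object $\overline{U}(F\#G)\overline{V}$ this is an equality, not merely an isomorphism: since $\labL_{FG}(F\#G)$ is by definition the composite of $\labL(F)$ and $\labL(G)$ and composition in the $2$-category $\gk(S,T)$ is strictly associative, the composite label of $\overline{U}(F\#G)\overline{V}$ coincides on the nose with $\labL(\overline{U}FG\overline{V})=\labL\bigl(S_{FG}(\overline{U}(F\#G)\overline{V})\bigr)$. On generating morphisms, the case $F\#G\notin\lbrace H,K\rbrace$ is immediate because both sides are the same whiskered interchange isomorphism $\intchg_{\labL(K),\labL(H)}$ (the surrounding $1$-cells agree). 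The only substantial case is $F\#G\in\lbrace H,K\rbrace$, where $S_{FG}$ splits the generator as in the previous proposition, e.g.\ $S_{FG}(.\widehat{(F\#G)K}.)=.\widehat{FK}.\circ.\widehat{GK}.$; here I would invoke the interchange coherence axioms of \cite[Remark~1.2]{GambLob:psm} to see that the single interchange isomorphism moving the composite label past $\labL(K)$ equals the composite $\labL(.\widehat{FK}.)\circ\labL(.\widehat{GK}.)$ of the two interchange isomorphisms. This compatibility between $\intchg$ and the composition defining $\labL_{FG}(F\#G)$ is what I expect to be the main obstacle.

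Granting this factorisation, the natural isomorphism comes for free. Fixing a pseudo-inverse $J$ of $S_{FG}$, contractibility of $\cc_\pschG$ supplies a unique natural isomorphism $\eta\colon\mathrm{id}_{\cc_\pschG}\cong S_{FG}\circ J$ whose component at $\overline{H}$ is the unique morphism $\overline{H}\to S_{FG}J(\overline{H})=\overline{H}_{FG}$. Whiskering with $\labL$ gives $\labL\eta\colon\labL\cong\labL\circ S_{FG}\circ J=\labL_{FG}\circ J$, whose component at $\overline{H}$ is precisely the interchange isomorphism $\intchg\colon\labL(\overline{H})\to\labL(\overline{H}_{FG})$ from \cite[Theorem~4.24]{div:past-thm-gray}. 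Uniqueness is immediate from the same theorem: any natural isomorphism $\labL\cong\labL_{FG}\circ J$ has, at each $\overline{H}$, an isomorphism $\labL(\overline{H})\to\labL(\overline{H}_{FG})$ between two composites of the same pasting diagram, and there is exactly one such.

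Finally, for the \emph{moreover} clause I would argue by inspection. Since $H_\alpha\notin\lbrace F,G\rbrace$, the face $H_\alpha$ survives untouched in $\pschG^{FG}$ and the glued label $\labL_{FG}(F\#G)$ — the composite of $\labL(F)$ and $\labL(G)$ — does not involve $\alpha$. Hence the operations ``replace $\alpha$ by $\alpha'$ on $H_\alpha$'' and ``glue $F$ and $G$'' act on disjoint pieces of the data and therefore commute, yielding $(\labL_{FG})^{\alpha'}=(\labL^{\alpha'})_{FG}$ directly.
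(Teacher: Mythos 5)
Your proposal is correct and follows essentially the same route as the paper: the paper simply sets $\labL_{FG}:=\labL\circ S_{FG}$ and obtains the isomorphism by whiskering with the canonical identification $\mathrm{id}\cong S_{FG}J$ in the contractible groupoid $\cc_\pschG$, with uniqueness and the \emph{moreover} clause argued exactly as you do; your additional verification that this functor really is the canonical functor of an explicit labelling (via the interchange coherence on the split generators $S_{FG}(.\widehat{(F\#G)K}.)=.\widehat{FK}.\circ.\widehat{GK}.$) is a detail the paper leaves implicit, not a different argument. The only blemish is the mis-ordered composite ``$\labL_{FG}\circ S_{FG}=\labL$'', which should read $\labL_{FG}=\labL\circ S_{FG}$ as functors out of $\cc_{\pschG^{FG}}$, as your own object-level computation makes clear.
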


\begin{proof}
    We define $\labL_{FG}:=\labL S_{FG}$, which clearly provides the isomorphism $\labL\cong\labL_{FG}J$ for any $J$. The uniqueness follows directly by \cite{div:past-thm-gray} since each component of the natural isomorphism must be the unique comparison between two different compositions of the pasting diagram associated with $\labL$. 

    The second part, i.e.\ the equality $(\labL_{FG})^{\alpha'}={(\labL^{\alpha'})}_{FG}$, is true because $(-)^{\alpha'}$ acts only on the image of $H_\alpha$ and $(-)_{FG}$ on $\lbrace F,G\rbrace$ which do not intersect by hypothesis. 
\end{proof}

\begin{notation*}
    We will denote with $\labL^{\alpha'}_{FG}$ the labelling $(\labL_{FG})^{\alpha'}={(\labL^{\alpha'})}_{FG}$. 
\end{notation*}

The next Lemma says that the unique isomorphism given by $\labL_{FG}$ interacts well with any natural transformation $\Gamma^\ast$ induced by a 3-cell $\Gamma\colon\alpha\Rrightarrow\alpha'$ acting on a 2-cell $\alpha=\labL(H_\alpha)$ relative to a face of $H_\alpha\in\pschG_2\setminus\lbrace F,G\rbrace$.

\begin{lemma}\label{lem:alpha'-and-FG}
    Let $\pschG$ be a pasting scheme, $F,G,H\in\pschG_2$ different faces, $\labL$ a labelling of $\pschG$ in a Gray-category $\gk$ and $\labL(H)=\alpha$. For any pseudo-inverse $J$ of $S_{FG}$ and any 3-cell $\Gamma\colon\alpha\Rrightarrow\alpha'$ the the following equation holds.
    \begin{center}
\begin{tikzcd}[ampersand replacement=\&]
	{\cc_{\pschG}} \&\& {\gk(S,T)[p,q]} \\
	{\cc_{\pschG^{FG}}}
	\arrow[""{name=0, anchor=center, inner sep=0}, "\labL", curve={height=-12pt}, from=1-1, to=1-3]
	\arrow[""{name=1, anchor=center, inner sep=0}, "J"', from=1-1, to=2-1]
	\arrow[""{name=2, anchor=center, inner sep=0}, "{\labL_{FG}^{\alpha'}}"', curve={height=18pt}, from=2-1, to=1-3]
	\arrow[""{name=3, anchor=center, inner sep=0}, "{\labL^{\alpha'}}"{description}, curve={height=12pt}, from=1-1, to=1-3]
	\arrow["{\Gamma^\ast}", shorten <=6pt, shorten >=6pt, Rightarrow, from=0, to=3]
	\arrow["\cong"{description}, draw=none, from=2, to=1]
\end{tikzcd} $=$
\begin{tikzcd}[ampersand replacement=\&]
	{\cc_{\pschG}} \&\& {\gk(S,T)[p,q]} \\
	{\cc_{\pschG^{FG}}}
	\arrow[""{name=0, anchor=center, inner sep=0}, "\labL", curve={height=-12pt}, from=1-1, to=1-3]
	\arrow["J"', from=1-1, to=2-1]
	\arrow[""{name=1, anchor=center, inner sep=0}, "{\labL_{FG}}"{description}, curve={height=-6pt}, from=2-1, to=1-3]
	\arrow[""{name=2, anchor=center, inner sep=0}, "{\labL_{FG}^{\alpha'}}"', curve={height=18pt}, from=2-1, to=1-3]
	\arrow["{\Gamma^\ast}"{yshift=-0.1cm,xshift=0.1cm}, shorten <=9pt, shorten >=4pt, Rightarrow, from=1, to=2]
	\arrow["\cong"{description}, draw=none, from=1, to=0]
\end{tikzcd}
    \end{center}
\end{lemma}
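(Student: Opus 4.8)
The plan is to verify the stated equality componentwise over the objects of $\cc_\pschG$ and to recognise its two sides as the two canonical factorisations of a single horizontal (Godement) composite; the claim then reduces to the naturality of $\Gamma^\ast$ proved in \Cref{thm:easy-nat-3-cell}. Throughout write $J=J_{FG}$ and recall from \Cref{prop:re-labelling-F+G} that $\labL_{FG}=\labL S_{FG}$ and, since $H\in\pschG_2\setminus\lbrace F,G\rbrace$, that $\labL^{\alpha'}_{FG}=\labL^{\alpha'}S_{FG}$.

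First I would make the comparison isomorphisms explicit. As $\cc_\pschG$ is a contractible groupoid, for each object $\overline{K}$ there is a unique morphism $\eta_{\overline{K}}\colon\overline{K}\to S_{FG}J\overline{K}$, and these assemble into a natural isomorphism $\eta\colon\mathrm{id}_{\cc_\pschG}\Rightarrow S_{FG}J$. By the uniqueness of the comparison in \cite[Theorem~4.24]{div:past-thm-gray}, the isomorphism $\labL\cong\labL_{FG}J$ of \Cref{prop:re-labelling-F+G} is exactly $\labL\eta$, with component $\labL(\eta_{\overline{K}})$ at $\overline{K}$ (here $\labL_{FG}J\overline{K}=\labL S_{FG}J\overline{K}$); the same argument applied to $\labL^{\alpha'}$ shows that $\labL^{\alpha'}\cong\labL^{\alpha'}_{FG}J$ is $\labL^{\alpha'}\eta$, with component $\labL^{\alpha'}(\eta_{\overline{K}})$.

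Next I would identify the transformation $\Gamma^\ast$ appearing on $\cc_{\pschG^{FG}}$ in the right-hand diagram with the whiskering $\Gamma^\ast S_{FG}$ of the transformation of \Cref{thm:easy-nat-3-cell}. For an object $\overline{M}=\overline{H}'H\overline{K}'$ of $\cc_{\pschG^{FG}}$, the $\pschG^{FG}$-component of $\Gamma^\ast$ is $\Gamma$ whiskered by the $\labL_{FG}$-images of the faces of $\overline{H}'$ and $\overline{K}'$. Because $H\neq F,G$, the face $H$ and the edges bounding it are untouched by the gluing, so $S_{FG}$ merely rewrites the occurrence of $F\#G$ as $FG$; since $\labL_{FG}=\labL S_{FG}$ labels $F\#G$ by the corresponding composite of $\labL(F)$ and $\labL(G)$, the composite $\labL_{FG}(\overline{M})$ equals $\labL(S_{FG}\overline{M})$ and the $\alpha$-slot occupies the same position. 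Hence the whiskering data at $\overline{M}$ coincide with those of the $\pschG$-version of $\Gamma^\ast$ at $S_{FG}\overline{M}$, so the $\pschG^{FG}$-transformation is $\Gamma^\ast S_{FG}$ and its whiskering by $J$ has component $\Gamma^\ast_{S_{FG}J\overline{K}}$ at $\overline{K}$. I expect this to be the only genuinely combinatorial point of the argument, since it is where the hypothesis $H\notin\lbrace F,G\rbrace$ is used.

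With both identifications in place, at each $\overline{K}\in\cc_\pschG$ the two sides of the claimed equation have the components
\[
\labL^{\alpha'}(\eta_{\overline{K}})\circ\Gamma^\ast_{\overline{K}}
\qquad\text{and}\qquad
\Gamma^\ast_{S_{FG}J\overline{K}}\circ\labL(\eta_{\overline{K}}).
\]
These are exactly the two expressions for the component at $\overline{K}$ of the horizontal composite $\Gamma^\ast\ast\eta$, and they agree by applying the naturality of $\Gamma^\ast\colon\labL\Rightarrow\labL^{\alpha'}$ from \Cref{thm:easy-nat-3-cell} to the morphism $\eta_{\overline{K}}$. As the whole argument is uniform in the chosen pseudo-inverse $J$, this proves the equation.
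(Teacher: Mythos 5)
Your proposal is correct and is essentially the paper's own argument: the paper also reduces the equation to the naturality square of $\Gamma^\ast\colon\labL\Rightarrow\labL^{\alpha'}$ at the unique morphism $\overline{K}\to S_{FG}J\overline{K}$, whose $\labL$-image is the comparison isomorphism $\gamma$. The only difference is that you spell out the identification of the induced transformation on $\cc_{\pschG^{FG}}$ with the whiskering $\Gamma^\ast S_{FG}$ (the point where $H\notin\{F,G\}$ is used), which the paper leaves implicit in its notation $\Gamma^\ast_{J\overline{K}}$.
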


\begin{proof}
    This follows directly by naturality of $\Gamma^\ast$ (\Cref{thm:easy-nat-3-cell}). Indeed, the left hand side of the required equation becomes the composite going left and then bottom of the commutative square below, and the right hand side the other one. 
\[\begin{tikzcd}[ampersand replacement=\&]
	{X:=\labL(\overline{K})} \& {\labL^{\alpha'}(\overline{K})=:X^{\alpha'}} \\
	{X_{FG}:=\labL_{FG}J(\overline{K})} \& {\labL^{\alpha'}_{FG}J(\overline{K})=:X^{\alpha'}_{FG}}
	\arrow["{\Gamma^\ast_{\overline{K}}}", from=1-1, to=1-2]
	\arrow["\gamma"', from=1-1, to=2-1]
	\arrow["{\Gamma^\ast_{J\overline{K}}}"', from=2-1, to=2-2]
	\arrow["\gamma", from=1-2, to=2-2]
\end{tikzcd}\]
\end{proof}

Let $\pschG$ be a pasting scheme, $F\lessdot G$ faces of $\pschG$ and $\labL$ a labelling in a Gray-category $\gk$ with $\alpha_F=\labL(F)$ and $\alpha_G=\labL(G)$. We denote with $\alpha_G.\alpha_F$ the \emph{pasting} of $\alpha_G$ and $\alpha_F$. The table below gives the precise formulas for $\alpha_G.\alpha_F$ in each of the four possible cases.
\begin{center}
\begin{tabular}{c|c}
     $F\#G$ & $\alpha_G.\alpha_F$  \\ \hline
     \scalebox{0.9}{
\begin{tikzcd}[ampersand replacement=\&]
	A \& B \& C \& D
	\arrow[""{name=0, anchor=center, inner sep=0}, "r"{description}, from=1-2, to=1-3]
	\arrow["i", from=1-1, to=1-2]
	\arrow[""{name=1, anchor=center, inner sep=0}, "q"', shift right, curve={height=18pt}, from=1-1, to=1-4]
	\arrow[""{name=2, anchor=center, inner sep=0}, "p", shift left, curve={height=-18pt}, from=1-2, to=1-3]
	\arrow["j", from=1-3, to=1-4]
	\arrow["{\alpha_G}"{xshift=0.1cm}, shift right=2, shorten <=5pt, shorten >=5pt, Rightarrow, from=0, to=1]
	\arrow["{\alpha_F}"{xshift=0.05cm}, shift right=2, shorten <=7pt, shorten >=7pt, Rightarrow, from=2, to=0]
\end{tikzcd}
     }& $\alpha_G\cdot(j\circ\alpha_F\circ i)$\\
     \scalebox{0.9}{
\begin{tikzcd}[ampersand replacement=\&]
	A \& B \& C \& D
	\arrow["r"{description}, from=1-2, to=1-3]
	\arrow["i", from=1-1, to=1-2]
	\arrow["j"', from=1-3, to=1-4]
	\arrow[""{name=0, anchor=center, inner sep=0}, "p", shift left, curve={height=-18pt}, from=1-2, to=1-4]
	\arrow[""{name=1, anchor=center, inner sep=0}, "q"', shift right, curve={height=18pt}, from=1-1, to=1-3]
	\arrow["{\alpha_F}"{pos=0.7,xshift=0.1cm}, shorten <=6pt, shorten >=1pt, Rightarrow, from=0, to=1-3]
	\arrow["{\alpha_G}"{pos=0.3,xshift=0.1cm}, shorten <=1pt, shorten >=6pt, Rightarrow, from=1-2, to=1]
\end{tikzcd}
     }& $(j\circ\alpha_G)\cdot(\alpha_F\circ i)$\\
     \scalebox{0.9}{
\begin{tikzcd}[ampersand replacement=\&]
	A \& B \& C \& D
	\arrow["r"{description}, from=1-2, to=1-3]
	\arrow["i"', from=1-1, to=1-2]
	\arrow["j", from=1-3, to=1-4]
	\arrow[""{name=0, anchor=center, inner sep=0}, "p", shift left, curve={height=-18pt}, from=1-1, to=1-3]
	\arrow[""{name=1, anchor=center, inner sep=0}, "q"', shift right, curve={height=18pt}, from=1-2, to=1-4]
	\arrow["{\alpha_F}"{pos=0.7,xshift=0.1cm}, shorten <=6pt, shorten >=1pt, Rightarrow, from=0, to=1-2]
	\arrow["{\alpha_G}"{pos=0.3,xshift=0.1cm}, shorten <=1pt, shorten >=6pt, Rightarrow, from=1-3, to=1]
\end{tikzcd}
     }& $(\alpha_G\circ i)\cdot(j\circ \alpha_F)$\\
     \scalebox{0.9}{
\begin{tikzcd}[ampersand replacement=\&]
	A \& B \& C \& D
	\arrow[""{name=0, anchor=center, inner sep=0}, "r"{description}, from=1-2, to=1-3]
	\arrow["i"', from=1-1, to=1-2]
	\arrow["j"', from=1-3, to=1-4]
	\arrow[""{name=1, anchor=center, inner sep=0}, "p", shift left, curve={height=-18pt}, from=1-1, to=1-4]
	\arrow[""{name=2, anchor=center, inner sep=0}, "q"', shift right, curve={height=18pt}, from=1-2, to=1-3]
	\arrow["{\alpha_F}"{xshift=0.1cm}, shift right=2, shorten <=5pt, shorten >=5pt, Rightarrow, from=1, to=0]
	\arrow["{\alpha_G}"{xshift=0.05cm}, shift right=2, shorten <=7pt, shorten >=7pt, Rightarrow, from=0, to=2]
\end{tikzcd}
     }&  $(j\circ \alpha_G\circ i)\cdot \alpha_F$
\end{tabular}
\end{center}

\begin{theorem}\label{thm:hard-nat-3-cell}
    Given any pseudo-inverse $J$ of $S_{FG}$, any 3-cell $\Pi\colon\alpha_G.\alpha_F\Rrightarrow\alpha'$ induces a natural transformation as below. 
\begin{center}
\begin{tikzcd}[ampersand replacement=\&]
	{\cc_{\pschG}} \& {} \& {\gk(S,T)[p,q]} \\
	{\cc_{\pschG^{FG}}} \& {}
	\arrow["\labL", from=1-1, to=1-3]
	\arrow["{\labL^{\alpha'}_{FG}}"', curve={height=12pt}, from=2-1, to=1-3]
	\arrow["J"', from=1-1, to=2-1]
	\arrow["{\overline{\Pi^\ast}}"{yshift=0.1cm,xshift=0.1cm}, shift right=4, shorten <=4pt, shorten >=8pt, Rightarrow, from=1-2, to=2-2]
\end{tikzcd}
\end{center}
\end{theorem}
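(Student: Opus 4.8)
The plan is to reduce the statement to \Cref{thm:easy-nat-3-cell}, applied not to $\pschG$ but to the glued pasting scheme $\pschG^{FG}$. The crucial observation is that, although $F\#G$ is a \emph{single} face of $\pschG^{FG}$, the labelling $\labL_{FG}=\labL S_{FG}$ assigns to it precisely the pasted $2$-cell $\alpha_G.\alpha_F$. Indeed, $S_{FG}$ sends any composite $\overline{U}(F\#G)\overline{V}$ to $\overline{U}FG\overline{V}$, so that $\labL_{FG}$ composes the labels of $F$ and $G$ exactly as recorded in the four cases of the table above. In particular the $3$-cell $\Pi\colon\alpha_G.\alpha_F\Rrightarrow\alpha'$ is a genuine $3$-cell of $\gk$ acting on the single face $F\#G$ of the labelled pasting scheme $(\pschG^{FG},\labL_{FG})$, exactly the situation handled by the $1$-to-$1$ case.

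With this identification in place, \Cref{thm:easy-nat-3-cell} applied to $\pschG^{FG}$, the labelling $\labL_{FG}$, and the $3$-cell $\Pi$ on the face $F\#G$ produces a natural transformation $\Pi^\ast\colon\labL_{FG}\Rightarrow\labL^{\alpha'}_{FG}$ of functors $\cc_{\pschG^{FG}}\to\gk(S,T)[p,q]$, whose component at $\overline{U}(F\#G)\overline{V}$ is the whiskering of $\Pi$ by the labels of the remaining faces in $\overline{U}$ and $\overline{V}$. Whiskering $\Pi^\ast$ on the right by the chosen pseudo-inverse $J$ then yields a natural transformation $\Pi^\ast J\colon\labL_{FG}J\Rightarrow\labL^{\alpha'}_{FG}J$ of functors $\cc_{\pschG}\to\gk(S,T)[p,q]$.

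Finally, \Cref{prop:re-labelling-F+G} supplies the unique natural isomorphism $\labL\cong\labL_{FG}J$, whose component at $\overline{K}\in\cc_{\pschG}$ is the comparison isomorphism $\intchg$ between the two composites $\labL(\overline{K})$ and $\labL_{FG}(J\overline{K})$. We define $\overline{\Pi^\ast}$ to be the pasting of this isomorphism with $\Pi^\ast J$, so that its component at $\overline{K}$ is
\[
\labL(\overline{K})\xrightarrow{\ \intchg\ }\labL_{FG}(J\overline{K})\xrightarrow{\ \Pi^\ast_{J\overline{K}}\ }\labL^{\alpha'}_{FG}(J\overline{K}).
\]
Being the horizontal composite of a natural isomorphism with a (whiskered) natural transformation, $\overline{\Pi^\ast}$ is automatically natural, which is the transformation asserted by the theorem.

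The only point that requires genuine verification is the claim of the first paragraph, namely that $\labL_{FG}$ labels the single face $F\#G$ by the pasting $\alpha_G.\alpha_F$ in each of the four shapes of \Cref{lem:F-G-composable}(b); this is a matter of tracing the definition of $S_{FG}$ against the four formulas tabulated above. Once that is granted, everything else is a formal consequence of \Cref{thm:easy-nat-3-cell} and \Cref{prop:re-labelling-F+G}. We emphasise, as the statement permits, that the resulting $\overline{\Pi^\ast}$ depends on the chosen pseudo-inverse $J$, since both the whiskering $\Pi^\ast J$ and the isomorphism $\labL\cong\labL_{FG}J$ do.
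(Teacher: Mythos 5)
Your proposal is correct and takes essentially the same route as the paper: the paper likewise defines $\overline{\Pi^\ast}$ as the pasting of the unique isomorphism $\labL\cong\labL_{FG}J$ from \Cref{prop:re-labelling-F+G} with the natural transformation obtained by applying \Cref{thm:easy-nat-3-cell} to the labelling $\labL_{FG}$ of $\pschG^{FG}$, where $\Pi$ acts on the single face $F\#G$ labelled by $\alpha_G.\alpha_F$. Your write-up in fact makes explicit the one verification the paper leaves implicit, namely that $\labL_{FG}=\labL S_{FG}$ does label $F\#G$ by the pasting $\alpha_G.\alpha_F$ in each of the four shapes.
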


\begin{proof}
        This follows by putting together \Cref{thm:easy-nat-3-cell}, for the labelling $\labL_{FG}$, and \Cref{prop:re-labelling-F+G} as shown below.  
        \begin{center}
\begin{tikzcd}[ampersand replacement=\&]
	{\cc_{\pschG}} \& {} \& {\gk(S,T)[p,q]} \\
	{\cc_{\pschG^{FG}}} \& {}
	\arrow["\labL", from=1-1, to=1-3]
	\arrow["{\labL^{\alpha'}_{FG}}"', curve={height=12pt}, from=2-1, to=1-3]
	\arrow["J"', from=1-1, to=2-1]
	\arrow["{\overline{\Pi^\ast}}"{yshift=0.1cm,xshift=0.1cm}, shift right=4, shorten <=4pt, shorten >=8pt, Rightarrow, from=1-2, to=2-2]
\end{tikzcd} $:=$
\begin{tikzcd}[ampersand replacement=\&]
	{\cc_{\pschG}} \&\& {\gk(S,T)[p,q]} \\
	{\cc_{\pschG^{FG}}}
	\arrow[""{name=0, anchor=center, inner sep=0}, "\labL", from=1-1, to=1-3]
	\arrow[""{name=1, anchor=center, inner sep=0}, "{\labL^{\alpha'}_{FG}}"', curve={height=12pt}, from=2-1, to=1-3]
	\arrow[""{name=2, anchor=center, inner sep=0}, "J"', from=1-1, to=2-1]
	\arrow[""{name=3, anchor=center, inner sep=0}, "{\labL_{FG}}"{description, pos=0.3}, curve={height=-8pt}, from=2-1, to=1-3]
	\arrow["{{\Pi^\ast}}"{pos=0.7}, shorten <=5pt, shorten >=3pt, Rightarrow, from=3, to=1]
	\arrow["\cong"{description}, draw=none, from=2, to=0]
\end{tikzcd}
        \end{center}
\end{proof}

\begin{rmk}\label{rmk:lab-and-FG-HK}
Let $\pschG$ be a pasting scheme and $F\lessdot G, H\lessdot K$ two pairs of faces.
\begin{enumerate}[(i)]
    \item By definitions, $(\pschG^{FG})^{HK}=(\pschG^{HK})^{FG}=:\pschG^{FG,HK}$. 
    \item The diagram below left is commutative, hence the diagram below right is commutative up to unique isomorphism (for any choice of inverses $J$).  

\begin{center}
\begin{tikzcd}[ampersand replacement=\&]
	{\cc_{(\pschG^{HK})^{FG}}=\cc_{(\pschG^{FG})^{HK}}} \& {\cc_{\pschG^{FG}}} \\
	{\cc_{\pschG^{HK}}} \& {\cc_\pschG}
	\arrow["{S_{FG}}", from=1-2, to=2-2]
	\arrow["{S_{HK}}"', from=2-1, to=2-2]
	\arrow["{S_{FG}}"', from=1-1, to=2-1]
	\arrow["{S_{HK}}", from=1-1, to=1-2]
\end{tikzcd}
\hspace{1cm}
\begin{tikzcd}[ampersand replacement=\&]
	{\cc_\pschG} \& {\cc_{\pschG^{FG}}} \\
	{\cc_{\pschG^{HK}}} \& {\cc_{\pschG^{FG,HK}}}
	\arrow["J"', from=1-1, to=2-1]
	\arrow[""{name=0, anchor=center, inner sep=0}, "J", from=1-1, to=1-2]
	\arrow[""{name=1, anchor=center, inner sep=0}, "J"', from=2-1, to=2-2]
	\arrow["J", from=1-2, to=2-2]
	\arrow["\cong"{description}, draw=none, from=0, to=1]
\end{tikzcd}
\end{center}

    \item The uniqueness of the isomorphism in the Gray-categorical pasting theorem proves that the following equality holds, where the isomorphisms in the diagrams are the unique possible ones, given choices of pseudo-inverses of each $S_{FG}, S^{\pschG^{FG}}_{HK}$ etc.
    \begin{center}
\scalebox{0.9}{\hspace{-1.5cm}
\begin{tikzcd}[ampersand replacement=\&]
	{\cc_{\pschG}} \& {} \\
	{\cc_{\pschG^{FG}}} \& {} \& {\gk(S,T)[p,q]} \\
	{\cc_{\pschG^{FG,HK}}} \& {}
	\arrow["\labL", curve={height=-12pt}, from=1-1, to=2-3]
	\arrow["{\labL_{FG}}"{description}, from=2-1, to=2-3]
	\arrow["{J_{FG}}"', from=1-1, to=2-1]
	\arrow["{{(J^{\pschG^{FG}})}_{HK}}"', from=2-1, to=3-1]
	\arrow["{\labL_{FG,HK}}"', curve={height=12pt}, from=3-1, to=2-3]
	\arrow["\cong"{description}, draw=none, from=1-2, to=2-1]
	\arrow["\cong"{description}, draw=none, from=2-2, to=3-1]
\end{tikzcd} $=$
\begin{tikzcd}[ampersand replacement=\&]
	\& {\cc_{\pschG}} \& {} \\
	{\cc_{\pschG^{FG}}} \& {\cc_{\pschG^{HK}}} \& {} \& {\gk(S,T)[p,q]} \\
	\& {\cc_{\pschG^{FG,HK}}} \& {}
	\arrow["\labL", curve={height=-12pt}, from=1-2, to=2-4]
	\arrow["{J_{FG}}"', from=1-2, to=2-1]
	\arrow["{{(J^{\pschG^{FG}})}_{HK}}"', from=2-1, to=3-2]
	\arrow["{\labL_{FG,HK}}"', curve={height=12pt}, from=3-2, to=2-4]
	\arrow["{\labL_{HK}}"{description}, from=2-2, to=2-4]
	\arrow["{J_{HK}}"{description}, from=1-2, to=2-2]
	\arrow["{{(J^{\pschG^{HK}})}_{FG}}"{description,xshift=0.1cm}, from=2-2, to=3-2]
	\arrow["\cong"{description}, draw=none, from=2-1, to=2-2]
	\arrow["\cong"{description}, draw=none, from=2-3, to=3-2]
	\arrow["\cong"{description}, draw=none, from=1-3, to=2-2]
\end{tikzcd}}
\end{center}

\end{enumerate}

\end{rmk}


\begin{theorem}\label{thm:hard-gen-loc-int}
    Let $\pschG$ be a pasting scheme, $F\lessdot G, H\lessdot K\in\pschG_2$ and $\labL$ a labelling of $\pschG$ in $\gk$ with $\alpha_F,\alpha_G,\beta_H,\beta_K$ the 2-cells in the labelling corresponding to the faces $F,G,H,K$ respectively. Given any two 3-cells $\Pi\colon\alpha_G.\alpha_F\Rrightarrow\alpha'$ and $\Omega\colon\beta_K.\beta_H\to\beta'$, then for any inverses $J$'s of all the $I$'s the following equality holds. 
\begin{center}
\scalebox{0.9}{
\begin{tikzcd}[ampersand replacement=\&]
	{\cc_{\pschG}} \& {} \\
	{\cc_{\pschG^{FG}}} \& {} \& {\gk(S,T)[p,q]} \\
	{\cc_{\pschG^{FG,HK}}} \& {}
	\arrow["\labL", curve={height=-12pt}, from=1-1, to=2-3]
	\arrow["{\labL^{\alpha'}_{FG}}"{description}, from=2-1, to=2-3]
	\arrow["{J}"', from=1-1, to=2-1]
	\arrow["{\overline{\Pi^\ast}}"{xshift=0.1cm}, shorten <=8pt, shorten >=10pt, Rightarrow, from=1-2, to=2-2]
	\arrow["{J}"', from=2-1, to=3-1]
	\arrow["{\labL^{\alpha',\beta'}_{FG,HK}}"', curve={height=12pt}, from=3-1, to=2-3]
	\arrow["{\overline{\Omega^\ast}}"{yshift=0.1cm,xshift=0.1cm}, shorten <=8pt, shorten >=10pt, Rightarrow, from=2-2, to=3-2]
\end{tikzcd} $=$
\begin{tikzcd}[ampersand replacement=\&]
	\& {\cc_{\pschG}} \& {} \\
	{\cc_{\pschG^{FG}}} \& {\cc_{\pschG^{HK}}} \& {} \& {\gk(S,T)[p,q]} \\
	\& {\cc_{\pschG^{FG,HK}}} \& {}
	\arrow["\labL", curve={height=-12pt}, from=1-2, to=2-4]
	\arrow["{J}"', from=1-2, to=2-1]
	\arrow["{\overline{\Omega^\ast}}"{xshift=0.1cm}, shorten <=8pt, shorten >=10pt, Rightarrow, from=1-3, to=2-3]
	\arrow["{J}"', from=2-1, to=3-2]
	\arrow["{\labL^{\alpha',\beta'}_{FG,HK}}"', curve={height=12pt}, from=3-2, to=2-4]
	\arrow["{\overline{\Pi^\ast}}"{yshift=0.1cm,xshift=0.1cm}, shorten <=8pt, shorten >=10pt, Rightarrow, from=2-3, to=3-3]
	\arrow["{\labL^{\beta'}_{HK}}"{description}, from=2-2, to=2-4]
	\arrow["{J}"{description}, from=1-2, to=2-2]
	\arrow["{J}"{description}, from=2-2, to=3-2]
	\arrow["\cong"{description}, draw=none, from=2-1, to=2-2]
\end{tikzcd}}
\end{center}
\end{theorem}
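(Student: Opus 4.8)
The plan is to reduce this \emph{2-to-1} statement to the \emph{1-to-1} order-independence already established for two $3$-cells acting on \emph{distinct} faces, namely \Cref{thm:easy-gen-loc-int}, applied over the doubly glued pasting scheme $\pschG^{FG,HK}$. The key observation is that on $\pschG^{FG,HK}$ the glued faces $F\#G$ and $H\#K$ are two \emph{different} faces, carrying the pasted labels $\alpha_G.\alpha_F$ and $\beta_K.\beta_H$, and that $\Pi$ and $\Omega$ act on these two distinct faces. Hence, once everything has been transported onto $\cc_{\pschG^{FG,HK}}$, the plain natural transformations $\Pi^\ast$ and $\Omega^\ast$ of \Cref{thm:easy-nat-3-cell} satisfy exactly the hypotheses of \Cref{thm:easy-gen-loc-int}, which gives their order-independence directly.

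First I would unfold both sides using the definition of $\overline{\Pi^\ast}$ and $\overline{\Omega^\ast}$ from \Cref{thm:hard-nat-3-cell}: each is a plain $\Pi^\ast$ (respectively $\Omega^\ast$) on the appropriate glued scheme, whiskered with the canonical comparison isomorphism of \Cref{prop:re-labelling-F+G}. By \Cref{rmk:lab-and-FG-HK}(i) the two iterated gluings $(\pschG^{FG})^{HK}$ and $(\pschG^{HK})^{FG}$ coincide, so both sides land over the same scheme $\pschG^{FG,HK}$, and by \Cref{rmk:lab-and-FG-HK}(ii)--(iii) the two gluing orders produce the same labelling $\labL_{FG,HK}$ and are related by the unique coherence isomorphism. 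The one remaining mismatch is that on the left $\Pi$ is applied \emph{before} the $HK$-gluing, whereas to invoke \Cref{thm:easy-gen-loc-int} on $\pschG^{FG,HK}$ one wants it applied after; since $F\#G$ is a face of $\pschG^{FG}$ distinct from $H$ and $K$, \Cref{lem:alpha'-and-FG} lets me slide $\Pi^\ast$ across the $HK$-gluing isomorphism, and symmetrically $\Omega^\ast$ across the $FG$-gluing on the right.

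Putting these steps together, each side becomes the unique comparison isomorphism onto $\labL_{FG,HK}$ (precomposed with the two copies of $J$), followed by $\Pi^\ast$ and $\Omega^\ast$ applied on $\cc_{\pschG^{FG,HK}}$ in the two opposite orders. The order of the latter two is immaterial by \Cref{thm:easy-gen-loc-int}, since $F\#G\neq H\#K$; the surrounding comparison isomorphisms agree on both sides because each is a $\gamma$-isomorphism between composites of the \emph{same} labelling, and these are unique by the Gray-categorical pasting theorem \cite[Theorem~4.24]{div:past-thm-gray}. The main obstacle is therefore not conceptual but the careful bookkeeping of the several comparison isomorphisms assembled on each side; the uniqueness clause of \cite[Theorem~4.24]{div:past-thm-gray} does the real work here, forcing the two $\gamma$-parts to coincide so that no explicit coherence computation is needed and the two composites are equal.
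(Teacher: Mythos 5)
Your proposal is correct and follows essentially the same route as the paper's proof: unfold $\overline{\Pi^\ast}$ and $\overline{\Omega^\ast}$, use \Cref{lem:alpha'-and-FG} to slide each plain natural transformation across the other pair's gluing isomorphism, invoke part (iii) of \Cref{rmk:lab-and-FG-HK} to reconcile the two gluing orders, and then apply \Cref{thm:easy-gen-loc-int} to $\Pi$ and $\Omega$ acting on the distinct faces $F\#G$ and $H\#K$ of $\pschG^{FG,HK}$, with the uniqueness clause of the Gray-categorical pasting theorem handling the remaining comparison isomorphisms. Nothing essential differs from the argument in the paper.
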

\black 

\begin{proof}
We will show that this follows by applying \Cref{lem:alpha'-and-FG} twice, part (iii) of \Cref{rmk:lab-and-FG-HK} once and finally \Cref{thm:easy-gen-loc-int}. We start writing explicitly the left hand side of the wanted equation, using the definitions of $\overline{\Pi^\ast}$ and $\overline{\Omega^\ast}$ (the unlabelled isomorphisms are the unique possible ones described in \Cref{prop:re-labelling-F+G} and \Cref{rmk:lab-and-FG-HK}). We recall that the pasting diagrams below are in the 2-category of categories, functors and natural transformations. 
\begin{center}
    \scalebox{0.9}{
\begin{tikzcd}[ampersand replacement=\&]
	{\cc_{\pschG}} \\
	{\cc_{\pschG^{FG}}} \&\&\& {\gk(S,T)[p,q]} \\
	{\cc_{\pschG^{FG,HK}}}
	\arrow["\labL", curve={height=-12pt}, from=1-1, to=2-4]
	\arrow[""{name=0, anchor=center, inner sep=0}, "{\labL^{\alpha'}_{FG}}"{description, pos=0.4}, curve={height=6pt}, from=2-1, to=2-4]
	\arrow["{J_{FG}}"', from=1-1, to=2-1]
	\arrow["{J_{FG}^{HK}}"', from=2-1, to=3-1]
	\arrow[""{name=1, anchor=center, inner sep=0}, "{\labL^{\alpha',\beta'}_{FG,HK}}"', curve={height=24pt}, from=3-1, to=2-4]
	\arrow[""{name=2, anchor=center, inner sep=0}, "{\labL^{\alpha'}_{FG,HK}}"{description, pos=0.3}, curve={height=2pt}, from=3-1, to=2-4]
	\arrow[""{name=3, anchor=center, inner sep=0}, "{\labL_{FG}}"{description, pos=0.3}, curve={height=-15pt}, from=2-1, to=2-4]
	\arrow["{{\Omega^\ast}}"{pos=0.6,yshift=-0.1cm,xshift=0.05cm}, shift left=3, shorten <=4pt, shorten >=4pt, Rightarrow, from=2, to=1]
	\arrow["{{\Pi^\ast}}"{xshift=0.1cm},shift left=3, shorten <=5pt, shorten >=5pt, Rightarrow, from=3, to=0]
	\arrow["\cong"{description}, draw=none, from=3-1, to=0]
	\arrow["\cong"{description}, draw=none, from=3, to=1-1]
\end{tikzcd}
$=$ 
\begin{tikzcd}[ampersand replacement=\&]
	{\cc_{\pschG}} \\
	{\cc_{\pschG^{FG}}} \&\&\& {\gk(S,T)[p,q]} \\
	{\cc_{\pschG^{FG,HK}}}
	\arrow["\labL", curve={height=-12pt}, from=1-1, to=2-4]
	\arrow["{J_{FG}}"', from=1-1, to=2-1]
	\arrow["{J_{FG}^{HK}}"', from=2-1, to=3-1]
	\arrow[""{name=0, anchor=center, inner sep=0}, "{\labL^{\alpha',\beta'}_{FG,HK}}"', curve={height=24pt}, from=3-1, to=2-4]
	\arrow[""{name=1, anchor=center, inner sep=0}, "{\labL^{\alpha'}_{FG,HK}}"{description, pos=0.3}, curve={height=6pt}, from=3-1, to=2-4]
	\arrow[""{name=2, anchor=center, inner sep=0}, "{\labL_{FG}}"{description, pos=0.4}, curve={height=-12pt}, from=2-1, to=2-4]
	\arrow[""{name=3, anchor=center, inner sep=0}, "{\labL_{FG,HK}}"{description, pos=0.3}, curve={height=-12pt}, from=3-1, to=2-4]
	\arrow["{{\Omega^\ast}}"{pos=0.6,yshift=-0.1cm,xshift=0.05cm}, shift left=3, shorten <=4pt, shorten >=4pt, Rightarrow, from=1, to=0]
	\arrow["\cong"{description}, draw=none, from=2, to=1-1]
	\arrow["{\Pi^\ast}"{pos=0.7}, shift left=2, shorten <=5pt, shorten >=4pt, Rightarrow, from=3, to=1]
	\arrow["\cong"{description, pos=0.3}, draw=none, from=2-1, to=3]
\end{tikzcd} 
$=$ 
}
\end{center}
\begin{center}
\scalebox{0.8}{
\hspace{-1cm}\begin{tikzcd}[ampersand replacement=\&]
	\& {\cc_{\pschG}} \\
	{\cc_{\pschG^{FG}}} \& {\cc_{\pschG^{HK}}} \&\&\& {\gk(S,T)[p,q]} \\
	\& {\cc_{\pschG^{FG,HK}}}
	\arrow["\labL", curve={height=-12pt}, from=1-2, to=2-5]
	\arrow["{J_{FG}}"', from=1-2, to=2-1]
	\arrow["{J_{HK}^{FG}}"', from=2-1, to=3-2]
	\arrow[""{name=0, anchor=center, inner sep=0}, "{\labL^{\alpha',\beta'}_{FG,HK}}"', curve={height=24pt}, from=3-2, to=2-5]
	\arrow[""{name=1, anchor=center, inner sep=0}, "{\labL^{\beta'}_{FG,HK}}"{description, pos=0.3}, curve={height=6pt}, from=3-2, to=2-5]
	\arrow[""{name=2, anchor=center, inner sep=0}, "{\labL_{FG,HK}}"{description, pos=0.3}, curve={height=-12pt}, from=3-2, to=2-5]
	\arrow["{J_{HK}}"{description}, from=1-2, to=2-2]
	\arrow["{J_{FG}^{HK}}"{description}, from=2-2, to=3-2]
	\arrow["\cong"{description, pos=0.3}, draw=none, from=2-1, to=2-2]
	\arrow[""{name=3, anchor=center, inner sep=0}, "{\labL_{HK}}"{description}, curve={height=-12pt}, from=2-2, to=2-5]
	\arrow["\cong"{description, pos=0.3}, draw=none, from=2-2, to=2-5]
	\arrow["{{\Pi^\ast}}"{pos=0.6,yshift=-0.05cm,xshift=0.05cm}, shift left=3, shorten <=4pt, shorten >=4pt, Rightarrow, from=1, to=0]
	\arrow["{{\Omega^\ast}}"{pos=0.7}, shift left=2, shorten <=5pt, shorten >=4pt, Rightarrow, from=2, to=1]
	\arrow["\cong"{description}, draw=none, from=1-2, to=3]
\end{tikzcd} $=$
\begin{tikzcd}[ampersand replacement=\&]
	\& {\cc_{\pschG}} \\
	{\cc_{\pschG^{FG}}} \& {\cc_{\pschG^{HK}}} \&\&\& {\gk(S,T)[p,q]} \\
	\& {\cc_{\pschG^{FG,HK}}}
	\arrow["\labL", curve={height=-12pt}, from=1-2, to=2-5]
	\arrow["{J_{FG}}"', from=1-2, to=2-1]
	\arrow["{J_{HK}^{FG}}"', from=2-1, to=3-2]
	\arrow[""{name=0, anchor=center, inner sep=0}, "{\labL^{\alpha',\beta'}_{FG,HK}}"', curve={height=24pt}, from=3-2, to=2-5]
	\arrow[""{name=1, anchor=center, inner sep=0}, "{\labL^{\beta'}_{FG,HK}}"{description, pos=0.3}, curve={height=6pt}, from=3-2, to=2-5]
	\arrow["{J_{HK}}"{description}, from=1-2, to=2-2]
	\arrow[""{name=2, anchor=center, inner sep=0}, "{J_{FG}^{HK}}"{description}, from=2-2, to=3-2]
	\arrow["\cong"{description, pos=0.3}, draw=none, from=2-1, to=2-2]
	\arrow[""{name=3, anchor=center, inner sep=0}, "{\labL_{HK}}"{description, pos=0.3}, curve={height=-12pt}, from=2-2, to=2-5]
	\arrow[""{name=4, anchor=center, inner sep=0}, "{\labL_{HK}^{\beta'}}"{description, pos=0.3}, curve={height=6pt}, from=2-2, to=2-5]
	\arrow["{{\Pi^\ast}}"{pos=0.6,yshift=-0.05cm,xshift=0.05cm}, shift left=3, shorten <=4pt, shorten >=4pt, Rightarrow, from=1, to=0]
	\arrow["\cong"{description}, draw=none, from=1-2, to=3]
	\arrow["\cong"{description, pos=0.4}, draw=none, from=2, to=1]
	\arrow["{{\Omega^\ast}}"{xshift=0.1cm}, shift left=2, shorten <=4pt, shorten >=4pt, Rightarrow, from=3, to=4]
\end{tikzcd}
} 
\end{center}

The first and last equality follow by \Cref{lem:alpha'-and-FG} applied to $(\Gamma^\ast,HK)$ and $(\Omega^\ast,FG)$, and the second equality follows by part (iii) of \Cref{rmk:lab-and-FG-HK} and \Cref{thm:easy-gen-loc-int} for the 3-cells $\Pi,\Omega$ and the labelling $\labL_{FG,HK}$.  
\end{proof}


Now, we will briefly explain how to understand/use this theorem. Let us consider an object $\overline{K}\in\cc_\pschG$, then $\labL(\overline{K})$ is the corresponding composite of the 2-dimensional pasting diagram defined by the labelling. Since $\Pi$ and $\Omega$ act on two faces of the labelling, we cannot apply the 3-cells to any composite. The pseudo-inverses $J$ give choices of different composites to which we can apply $\Pi$ and/or $\Omega$. Below, setting $X=\labL(\overline{K})$,  we write these choices with $X_{FG}=\labL_{FG}J(\overline{K})$, $X_{FG}^{\alpha'}=\labL^{\alpha'}_{FG}J(\overline{K})$ and analogous for $\beta'$ and $HK$. 
Then the equation in \Cref{thm:hard-gen-loc-int} is the same as the commutativity of the following diagram. 
    \begin{equation}\label{eq:gen-hard--local-interch}
\begin{tikzcd}[ampersand replacement=\&]
	X \& {X_{FG}} \& {X_{FG}^{\alpha'}} \& {{(X_{FG}^{\alpha'})}_{HK}} \\
	{X_{HK}} \\
	{X_{HK}^{\beta'}} \& {{(X_{HK}^{\beta'})}_{FG}} \& {{(X_{HK}^{\beta'})}_{FG}^{\alpha'}} \& {{(X_{FG}^{\alpha'})}_{HK}^{\beta'}}
	\arrow["\gamma"', from=1-1, to=2-1]
	\arrow["\gamma"', from=3-1, to=3-2]
	\arrow["{\Pi^\ast}", from=1-2, to=1-3]
	\arrow["\gamma", from=1-3, to=1-4]
	\arrow["{\Omega^\ast}"', from=2-1, to=3-1]
	\arrow["{\Pi^\ast}"', from=3-2, to=3-3]
	\arrow["\gamma", from=1-1, to=1-2]
	\arrow["{\Omega^\ast}", from=1-4, to=3-4]
	\arrow["\gamma"', from=3-3, to=3-4]
	\arrow["{\overline{\Omega^\ast}}"', color={rgb,255:red,214;green,92;blue,92}, curve={height=20pt}, from=1-1, to=3-1]
	\arrow["{\overline{\Pi^\ast}}"', color={rgb,255:red,92;green,214;blue,92}, curve={height=15pt}, from=1-1, to=1-3]
	\arrow["{\overline{\Omega^\ast}}"', color={rgb,255:red,214;green,92;blue,92}, curve={height=-10pt}, from=1-3, to=3-4]
	\arrow["{\overline{\Pi^\ast}}", color={rgb,255:red,92;green,214;blue,92}, curve={height=-15pt}, from=3-1, to=3-3]
\end{tikzcd}
\end{equation}

We underline that, since we proved the theorem above for any choice of pseudo-inverses, the diagram \eqref{eq:gen-hard--local-interch} is commutative for any choice of $X_{FG}, {(X_{FG})}_{HK}, X_{HK}$ etc. This theorem allow us to do calculations regarding 3-cells in a Gray-category using the 2-dimensional pasting diagrams.  

\subsection{Special Case}
\label{sec:2-1-special-case}

We end considering when the composition is not unique, i.e. a pasting scheme satisfying the hypothesis of \Cref{lem:F-G-composable} part (a). We will consider a leading example, which does not comprise every case but provides the strategy to use each time. Let us consider the following pasting scheme: 
\begin{center}
$\pschG:=\quad$
\begin{tikzcd}
	S & A & B & C & D & T.
	\arrow[""{name=0, anchor=center, inner sep=0}, "{a_0}", curve={height=-12pt}, from=1-1, to=1-2]
	\arrow[""{name=1, anchor=center, inner sep=0}, "{a_1}"', curve={height=12pt}, from=1-1, to=1-2]
	\arrow["f"{description}, from=1-2, to=1-3]
	\arrow[""{name=2, anchor=center, inner sep=0}, "{c_0}", curve={height=-12pt}, from=1-5, to=1-6]
	\arrow[""{name=3, anchor=center, inner sep=0}, "{c_1}"', curve={height=12pt}, from=1-5, to=1-6]
	\arrow[""{name=4, anchor=center, inner sep=0}, "{b_0}", curve={height=-12pt}, from=1-3, to=1-4]
	\arrow[""{name=5, anchor=center, inner sep=0}, "{b_1}"', curve={height=12pt}, from=1-3, to=1-4]
	\arrow["g"{description}, from=1-4, to=1-5]
	\arrow["F"{description}, draw=none, from=0, to=1]
	\arrow["H"{description}, draw=none, from=4, to=5]
	\arrow["G"{description}, draw=none, from=2, to=3]
\end{tikzcd}
\end{center}
The groupoid $\cc_\pschG$ has objects such as $FGH$ and $HFG$ but also $FHG$. \black The point is that \emph{we have a choice} of the composition where the labelling of $F$ and $G$ occur next to each other. In other words, we can replace the previous pasting scheme with one of the following: 
\begin{center}
    \begin{tabular}{cc}
        $HFG\leftrightsquigarrow\quad$ &  
\begin{tikzcd}
	S & A & B & C & D & T
	\arrow["{a_0}", from=1-1, to=1-2]
	\arrow["f", from=1-2, to=1-3]
	\arrow[""{name=0, anchor=center, inner sep=0}, from=1-3, to=1-4]
	\arrow["g", from=1-4, to=1-5]
	\arrow[""{name=1, anchor=center, inner sep=0}, "{b_0}", curve={height=-18pt}, from=1-3, to=1-4]
	\arrow["{c_0}", from=1-5, to=1-6]
	\arrow[""{name=2, anchor=center, inner sep=0}, "{c_1gb_1fa_1}"', curve={height=24pt}, from=1-1, to=1-6]
	\arrow["{F\#G}"{description}, draw=none, from=0, to=2]
	\arrow["H"{description}, draw=none, from=1, to=0]
\end{tikzcd}\\
      $FGH\leftrightsquigarrow\quad$   &  
\begin{tikzcd}[ampersand replacement=\&]
	S \& A \& B \& C \& D \& T
	\arrow["{a_1}"', from=1-1, to=1-2]
	\arrow["f"', from=1-2, to=1-3]
	\arrow[""{name=0, anchor=center, inner sep=0}, from=1-3, to=1-4]
	\arrow["g"', from=1-4, to=1-5]
	\arrow[""{name=1, anchor=center, inner sep=0}, "{b_1}"', curve={height=18pt}, from=1-3, to=1-4]
	\arrow["{c_1}"', from=1-5, to=1-6]
	\arrow[""{name=2, anchor=center, inner sep=0}, "{c_0gb_0fa_0}", curve={height=-24pt}, from=1-1, to=1-6]
	\arrow["{F\#G}"{description}, draw=none, from=0, to=2]
	\arrow["H"{description}, draw=none, from=1, to=0]
\end{tikzcd}
    \end{tabular}
\end{center}

Both scenarios lead us back to the situations we examine in \Cref{lem:F-G-composable}. 



\bibliography{References}

\begin{thebibliography}{FMP11}

\bibitem[B{\'e}n67]{benabou:introduction}
J.~B{\'e}nabou.
\newblock Introduction to bicategories.
\newblock In {\em Reports of the Midwest Category Seminar}, pages 1--77.
  Springer, 1967.

\bibitem[BMS18]{BMS_gray-duals}
J.~W. Barrett, C.~Meusburger, and G.~Schaumann.
\newblock Gray categories with duals and their diagrams.
\newblock \href{https://arxiv.org/abs/1211.0529}{preprint arXiv:1211.0529},
  2018.

\bibitem[CM23]{CarMue:orb-comp-3-cat}
N.~Carqueville and L.~M\"uller.
\newblock Orbifold completion of 3-categories.
\newblock \href{https://arxiv.org/abs/2307.06485}{preprint arXiv:2307.06485},
  2023.

\bibitem[CMS20]{CARQUEVILLE2020107024}
N.~Carqueville, C.~Meusburger, and G.~Schaumann.
\newblock 3-dimensional defect {TQFT}s and their tricategories.
\newblock {\em Advances in Mathematics}, 364, 2020.

\bibitem[DV23]{div:past-thm-gray}
N.~Di~Vittorio.
\newblock A {G}ray-categorical pasting theorem.
\newblock {\em Theory and Applications of Categories}, 39(5):150--171, 2023.

\bibitem[FMP11]{FMarPick:fund-gray-grp}
J.~Faria~{M}artins and R.~Picken.
\newblock The fundamental {G}ray 3-groupoid of a smooth manifold and local
  3-dimensional holonomy based on a 2-crossed module.
\newblock {\em Differential Geometry and its Applications}, 29(2), 2011.

\bibitem[GL21]{GambLob:psm}
N.~Gambino and G.~Lobbia.
\newblock On the formal theory of pseudomonads and pseudodistributive laws.
\newblock {\em Theory and Applications of Categories}, 37(2):14--56, 2021.

\bibitem[GPS95]{GordonR:coht}
R.~Gordon, A.~J. Power, and R.~Street.
\newblock Coherence for tricategories.
\newblock {\em Memoirs of the American Mathematical Society}, 117(558), 1995.

\bibitem[Gur13]{GurskiN:algtt}
N.~Gurski.
\newblock {\em Coherence in Three-Dimensional Category Theory}.
\newblock Cambridge Tracts in Mathematics. Cambridge University Press, 2013.

\bibitem[Lac11]{lack:gray_quillen}
S.~Lack.
\newblock A {Q}uillen model structure for {G}ray-categories.
\newblock {\em Journal of K-theory}, 8(2):183--221, 2011.

\bibitem[Pow90]{AJPow90}
A.~J. Power.
\newblock A 2-categorical pasting theorem.
\newblock {\em Journal of Algebra}, 129:439--445, 1990.

\bibitem[Wan14]{wang20143}
W.~Wang.
\newblock On 3-gauge transformations, 3-curvatures, and gray-categories.
\newblock {\em Journal of Mathematical Physics}, 55(4), 2014.

\end{thebibliography}
\bibliographystyle{alpha}

\end{document}